\newtheorem*{maintheorem*}{Main Theorem}
\newtheorem{theorem}{Theorem}[section]
\newtheorem{question}[theorem]{Question}
\newtheorem{prop}[theorem]{Proposition}
\newtheorem{conj}[theorem]{Conjecture}
\newtheorem{lemma}[theorem]{Lemma}
\newtheorem{cor}[theorem]{Corollary}
\theoremstyle{definition}
\newtheorem{definition}[theorem]{Definition}
\newtheorem{remark}[theorem]{Remark}
\newtheorem{example}[theorem]{Example}
\numberwithin{equation}{section}
\newcommand{\nn}{\mathbb{N}}
\newcommand{\pp}{\mathbb{P}}
\newcommand{\qq}{\mathbb{Q}}
\newcommand{\rr}{\mathbb{R}}
\newcommand{\zz}{\mathbb{Z}}
\providecommand\ldb{\llbracket}
\providecommand\rdb{\rrbracket}
\newcommand\pval{\mathsf{v}_p}
\newcommand{\gp}{\text{gp}}
\newcommand{\uu}{\mathcal{U}}
\newcommand{\supp}{\text{supp}}
\keywords{positive semirings, semirings, positive monoids, atomicity, factorization theory, numerical monoids, ACCP, bounded factorizations, finite factorizations, half-factoriality, length-factoriality}
\subjclass[2010]{Primary: 16Y60, 20M13; Secondary: 06F05, 20M14}
\begin{document}
	
	\mbox{}
	\title{Bi-atomic classes of positive semirings}
	
	\author{Nicholas R.~Baeth}
	\address{Department of Mathematics\\Franklin and Marshall College\\Lancaster, PA 17604}
	\email{nicholas.baeth@fandm.edu}
	
	\author{Scott T. Chapman}
	\address{Department of Mathematics and Statistics\\Sam Houston State University\\Huntsville, TX 77341}
	\email{scott.chapman@shsu.edu}
	
	\author{Felix Gotti}
	\address{Department of Mathematics\\MIT\\Cambridge, MA 02139}
	\email{fgotti@mit.edu}
	
	\date{\today}

	\begin{abstract}
		A subsemiring $S$ of $\rr$ is called a \emph{positive semiring} provided that $S$ consists of nonnegative numbers and $1 \in S$. Here we study factorizations in both the additive monoid $(S,+)$ and the multiplicative monoid $(S\setminus\{0\}, \cdot)$. In particular, we investigate when, for a positive semiring $S$, both $(S,+)$ and $(S\setminus\{0\}, \cdot)$ have the following properties: atomicity, the ACCP, the bounded factorization property (BFP), the finite factorization property (FFP), and the half-factorial property (HFP). It is well known that in the context of cancellative and commutative monoids, the chain of implications HFP $\Rightarrow$ BFP and FFP $\Rightarrow$ BFP $\Rightarrow$ ACCP $\Rightarrow$ atomicity holds. Here we construct classes of positive semirings wherein both the additive and multiplicative structures satisfy each of these properties, and we also give examples to show that, in general, none of the implications in the previous chain is reversible.
	\end{abstract}
	\bigskip

\maketitle

\bigskip
%%%%%%%%%%%%%%%%%%%
%%%%%%%%%%%%%%%%%%%
\section{Introduction} \label{sec:intro}

\smallskip

The atomic structure of \emph{positive monoids} (i.e., additive submonoids of the nonnegative cone of the real line) has been the subject of much recent investigation. The simplest positive monoids are numerical monoids (i.e., additive monoids consisting of nonnegative integers, up to isomorphism), and their additive structure has been investigated by many authors during the last three decades (see \cite{AG16,GR09} and references therein). In addition, the atomicity of Puiseux monoids (i.e., additive monoids consisting of nonnegative rational numbers) has been the subject of various papers during the last four years (see \cite{CGG21,CGG20} and references therein). Puiseux monoids have also been studied in connection to commutative rings~\cite{CG19} and factorizations of matrices~\cite{BG20}. Positive monoids can be thought of as natural higher-rank generalizations of Puiseux monoids, which are precisely the positive monoids of rank $1$. Submonoids of finite-rank free commutative monoids are also finite-rank positive monoids up to isomorphism, and their atomicity and arithmetic were studied in \cite{CKO00,CKO02} and, more recently, in~\cite{fG20,fG20a}. Positive monoids have also been considered in surprising contexts such as music theory; see the recent paper~\cite{mBA20}.

\smallskip

If a positive monoid contains $1$ and is closed under multiplication, then we call it a \emph{positive semiring}. Note that numerical monoids (other than $\nn_0$) are not positive semirings even though they are closed under multiplication. Nevertheless, factorization aspects of their multiplicative structure were recently investigated in~\cite{BE20}; we briefly consider this situation in Remark \ref{r:numerical-monoids}. Puiseux monoids generated by geometric sequences are positive semirings. They were systematically studied in \cite{CGG20} under the term ``cyclic rational semirings", where various factorization invariants of their additive structure were considered (the arithmetic of their multiplicative structure was briefly explored in~\cite{BG20}). The atomicity of certain generalizations of cyclic rational semirings was considered in~\cite{ABP21}. Furthermore, for a quadratic algebraic integer~$\tau$, both the elasticity and delta set of the multiplicative monoid of the positive semiring $\nn_0[\tau]$ were studied in~\cite[Section~3]{CCMS09}. On the other hand, the atomicity of the additive structure of the positive semiring $\nn_0[\alpha]$, where $\alpha$ is an algebraic integer, has been recently investigated in~\cite{CG20}. Finally, when $\alpha$ is transcendental, $\nn_0[\alpha]$ is isomorphic to the polynomial semiring $\nn_0[x]$, whose multiplicative structure was recently investigated in~\cite{CF19}.

\smallskip

A cancellative and commutative monoid is atomic if every non-invertible element factors into irreducibles, while it satisfies the ascending chain condition on principal ideals (or the ACCP) if every ascending chain of principal ideals eventually stabilizes, in which case, we also say that it is an ACCP monoid. One can easily show that monoids satisfying the ACCP are atomic. An atomic monoid is a bounded factorization monoid (or a BFM) if every element has only finitely many factorization lengths, while it is a finite factorization monoid (or an FFM) if every element has only finitely many factorizations (up to order and associates). Clearly, each FFM is a BFM, and it is not hard to argue that each BFM satisfies the ACCP. Finally, an atomic monoid is a half-factorial monoid (or an HFM) if every element has exactly one factorization length, while it is a unique factorization monoid (or a UFM) if every element has exactly one factorization (up to order and associates). Observe that each UFM is both an FFM and an HFM, and each HFM is a BFM. However, there are BFMs that are neither FFMs nor HFMs (see, for instance, \cite[Example~4.7]{AG20}). The implications mentioned in this paragraph are shown in Diagram~\eqref{diag:AAZ's chain for monoids}.

\smallskip

\begin{equation} \label{diag:AAZ's chain for monoids}
	\begin{tikzcd}%[cramped]
		\textbf{ UFM } \ \arrow[r, Rightarrow]  \arrow[d, Rightarrow] & \ \textbf{ HFM } \arrow[d, Rightarrow] \\
		\textbf{ FFM } \ \arrow[r, Rightarrow] & \ \textbf{ BFM } \arrow[r, Rightarrow]  & \textbf{ ACCP monoid}  \arrow[r, Rightarrow] & \textbf{ atomic monoid}
	\end{tikzcd}
\end{equation}

\smallskip

Diagram~\eqref{diag:AAZ's chain for monoids} was introduced in~\cite{AAZ90} by D. D. Anderson, D. F. Anderson, and M. Zafrullah in the setting of integral domains, where the authors illustrated that none of the involved implications is reversible. We call a positive semiring $S$ \emph{bi-atomic} (or \emph{bi-ACCP}, \emph{bi-BFS}, \emph{bi-FFS}, \emph{bi-HFS}, \emph{bi-UFS}) provided that both its additive monoid $(S,+)$ and its multiplicative monoid $(S \setminus \{0\}, \cdot)$ satisfy the corresponding property. The implications in Diagram~\eqref{diag:AAZ's chain for semirings} follow immediately from those in Diagram~\eqref{diag:AAZ's chain for monoids}.

\smallskip

\begin{equation} \label{diag:AAZ's chain for semirings}
	\begin{tikzcd}%[cramped]
		\textbf{ bi-UFS } \ \arrow[r, Rightarrow]  \arrow[d, Rightarrow] & \ \textbf{ bi-HFS } \arrow[d, Rightarrow] \\
		\textbf{ bi-FFS } \ \arrow[r, Rightarrow] & \ \textbf{ bi-BFS } \arrow[r, Rightarrow]  & \textbf{ bi-ACCP semiring}  \arrow[r, Rightarrow] & \textbf{ bi-atomic semiring}
	\end{tikzcd}
\end{equation}

\smallskip

In this paper, we use known results on the atomicity of positive monoids to gain a better insight of atomicity in certain classes of bi-atomic positive semirings. Our primary purpose here is twofold. On one hand, we identify classes of positive semirings that are bi-atomic, bi-ACCP, bi-BFSs, bi-FFSs, bi-HFSs, and bi-UFSs. On the other hand, we construct examples of positive semirings to illustrate that, as for the case of Diagram~\eqref{diag:AAZ's chain for monoids} in the class of (multiplicative monoids of) integral domains, none of the implications in Diagram~\eqref{diag:AAZ's chain for semirings} is reversible in the class of positive semirings. In order to accomplish our goals, we will consider simultaneously these properties for both the additive and multiplicative monoids of various classes of positive semirings. We emphasize, however, that we still do not know whether the topmost horizontal implication in Diagram~\eqref{diag:AAZ's chain for semirings} is reversible (see Conjecture~\ref{conj:bi-UFS} and Question~\ref{quest:bi-HFS/LFS}). In Section~\ref{sec:prelim}, we set the stage by recalling some basic definitions related to the content of this paper. We then consider atomicity (in Section~\ref{sec:atomicity}), the ascending chain condition on principal ideals (in Section~\ref{sec:ACCP}), the bounded factorization property (in Section~\ref{sec:BFP}), the finite factorization property (in Section~\ref{sec:FFP}), and finally the half-factorial property (in Section~\ref{sec:O/HFP}).

\bigskip
%%%%%%%%%%%%%%%%%%%%%%
%%%%%%%%%%%%%%%%%%%%%%
\section{General Notation and Definitions}
\label{sec:prelim}

In this section, we present the notation and definitions related to commutative monoids/semirings and factorization theory we will need for later sections. For a more comprehensive overview of these areas, the interested reader can consult~\cite{pG01} for commutative monoids, \cite{jsG99} for semirings,  and~\cite{GH06} for factorization theory.

\smallskip

We denote the empty set by $\emptyset$. The sets $\nn_0$, $\nn$, and $\pp$ will denote the set of nonnegative integers, positive integers, and prime numbers, respectively. In addition, the set of integers between $a, b \in \mathbb{Z}$ will be denoted by $\ldb a,b \rdb$, that is, $\ldb a,b \rdb = \{n \in \zz \mid a \le n \le b\}$ (observe that $\ldb a,b \rdb$ is empty when $a > b$). For $r \in \rr$ and $X \subseteq \rr$, we set $X_{\ge r} := \{x \in X \mid x \ge r\}$ and use the notations $X_{> r}, X_{\le r}$, and $X_{< r}$ in a similar manner. For $q \in \qq_{> 0}$, we refer to the unique $n,d \in \nn$ such that $q = n/d$ and $\gcd(n,d)=1$ as the \emph{numerator} and \emph{denominator} of $q$ and denote them by $\mathsf{n}(q)$ and $\mathsf{d}(q)$, respectively. Lastly, given $Q \subseteq \qq_{>0}$, we set $\mathsf{n}(Q) := \{\mathsf{n}(q) \mid q \in Q\}$ and $\mathsf{d}(Q) := \{\mathsf{d}(q) \mid q \in Q\}$.

\medskip
%%%%%%%%%%%%%%%%%%
\subsection{Commutative Monoids}

\smallskip

Throughout this manuscript, the term \emph{monoid} refers to a cancellative, commutative semigroup with identity. As we have the need to study both additive and multiplicative submonoids of $\mathbb R_{\geq 0}$, we will introduce the relevant factorization-theoretic terminology for a generic operation $\asterisk$. Let $(M,\asterisk)$ be a monoid with identity denoted by $\iota$. We set $M^\bullet := M \setminus \{\iota\}$, while we let $\uu(M)$ denote the group of invertible elements of $M$. We say that $M$ is \emph{trivial} when $M^\bullet = \emptyset$ and \emph{reduced} when $\uu(M) = \{\iota\}$. The \emph{Grothendieck group} $\gp(M)$ of $M$ is the unique abelian group $\gp(M)$ up to isomorphism satisfying that any abelian group containing a homomorphic image of $M$ will also contain a homomorphic image of $\gp(M)$. The \emph{rank} of $M$ is defined to be the rank of $\gp(M)$ as a $\zz$-module. The \emph{reduced monoid} of $M$ is the quotient of $M$ by $\uu(M)$, which is denoted by $M_{\text{red}}$. For $b, c \in M$, we say that $c$ \emph{divides} $b$ \emph{in} $M$ if there exists $d \in M$ such that $b = c \ast d$, in which case, we write $c \mid_M b$. A submonoid $M'$ of $M$ is a \emph{divisor-closed submonoid} if every element of $M$ dividing an element of $M'$ in $M$ belongs to $M'$.

\smallskip

If $A$ is a subset of $M$, then we let $\langle A \rangle$ be the submonoid of $M$ generated by the set $A$, that is, $\langle A \rangle = \left\{a_1 \ast \dots \ast a_n \mid n \in \nn_0 \text{ and } a_1, \dots, a_n \in A \right\}$, where the product of zero elements is the identity element. Clearly, $\langle A \rangle$ is the smallest submonoid of~$M$ containing $A$. If $M = \langle A \rangle$, we say that $A$ is a \emph{generating set} of $M$. The monoid $M$ is said to be finitely generated provided that $M = \langle A \rangle$ for some finite subset $A$ of $M$. An element $a \in M \setminus \uu(M)$ is an \emph{atom} if it cannot be written as $a = x \asterisk y$ for any two non-invertible elements $x,y \in M$. The set of atoms of $M$ will be denoted by $\mathcal{A}(M)$. Observe that if $M$ is reduced, then $\mathcal{A}(M)$ is contained in every generating set of $M$. Following \cite{CDM99}, we say that $M$ is \emph{antimatter} if $\mathcal{A}(M)$ is empty. We have a special interest in monoids that can be generated by their sets of atoms.

\smallskip

\begin{definition}
	A monoid $M$ is \emph{atomic} if each element in $M \setminus \uu(M)$ factors into atoms.
\end{definition}

\smallskip

Atomic monoids will play a crucial role in the upcoming discussion. One can readily verify that $M$ is atomic if and only if $M_{\text{red}}$ is atomic. In the context of monoids and integral domains, the ascending chain condition on principal ideals is a property that is often studied in connection to atomicity. An \emph{ideal} $I$ of~$M$ is a subset~$I$ of~$M$ satisfying $I \asterisk M := \{y \asterisk x \mid y \in I \text{ and } x \in M\} \subseteq I$ (or, equivalently, $I \asterisk M = I$). If~$I$ is an ideal of~$M$ such that $I = y \asterisk M := \{y \asterisk x \mid x \in M\}$ for some $y \in M$, then $I$ is \emph{principal}. In addition, $M$ satisfies the \emph{ascending chain condition on principal ideals} (or \emph{ACCP}) provided that each ascending chain of principal ideals of $M$ stabilizes. It is worth noting that any monoid satisfying the ACCP is atomic \cite[Proposition~1.1.4]{GH06}. However, the reverse implication does not hold in general; for instance, we will exhibit in Proposition~\ref{prop:infinitely many bi-atomic PCS without the ACCP} a class of additive submonoids of $(\qq,+)$ that are atomic but do not satisfy the ACCP. Atomic monoids and integral domains that do not satisfy the ACCP are not that easy to produce; the first such example was constructed by A.~Grams in~\cite{aG74}, and further examples were constructed by A.~Zaks in~\cite{aZ82}.

\smallskip

The monoid $M$ is \emph{free} on a subset $P$ of $M$ if every map $P \to M'$, where $M'$ is a monoid, uniquely extends to a monoid homomorphism $M \to M'$. For each set $P$, there is a unique free monoid on $P$ up to isomorphism. If $M$ is the free monoid on $P$, then every $x \in M$ can be written uniquely in the form $x = \Asterisk_{p \in P} \,  p^{\pval(x)}$, where $\pval(x) \in \nn_0$ and $\pval(x) > 0$ only for finitely many elements $p \in P$. Since the monoid $M$ is determined by $P$ up to isomorphism, we will sometimes denote $M$ by $\mathcal{F}(P)$. Note that the fundamental theorem of arithmetic can be simply stated as $\nn = \mathcal{F}(\pp)$, where $\mathbb N$ is considered multiplicatively.

\smallskip

Additive submonoids of $\rr_{\ge 0}$ play a central role throughout this paper. Following~\cite{fG19}, we call them \emph{positive monoids}. If a \emph{positive monoid} consists of rational numbers, then it is called a \emph{Puiseux monoid}. The atomic structure of Puiseux monoids has been systematically studied during the last few years (see, for instance, the recent paper~\cite{GGT21} and references therein). Puiseux monoids are, up to isomorphism, the positive monoids of rank $1$. Indeed, nontrivial Puiseux monoids can be characterized as the torsion-free rank-$1$ monoids that are not groups (see \cite[Theorem~3.12]{GGT21} and \cite[Section~24]{lF70}). Another important subclass of positive monoids is that consisting of \emph{numerical monoids}, that is, the additive cofinite submonoids of $\nn_0$ (see~\cite{GR09} for a treatment of numerical monoids and~\cite{AG16} for some of their applications). Numerical monoids account for all finitely generated Puiseux monoids, up to isomorphism.

\medskip
%%%%%%%%%%%%%%
\subsection{Factorizations}

Now assume that $M$ is atomic. The free (commutative) monoid on $\mathcal{A}(M_{\text{red}})$ is denoted by $\mathsf{Z}(M)$. Let $\pi \colon \mathsf{Z}(M) \to M_\text{red}$ be the unique monoid homomorphism fixing $\mathcal{A}(M_{\text{red}})$. If $z = a_1 \asterisk \cdots \asterisk a_\ell \in \mathsf{Z}(M)$, where $a_1, \dots, a_\ell \in \mathcal{A}(M_{\text{red}})$, then $\ell$ is called the \emph{length} of $z$ and is denoted by~$|z|$. For each $b \in M$, we set
\[
	\mathsf{Z}_M(b) = \pi^{-1} (b \asterisk \uu(M)) \quad \text{and} \quad \mathsf{L}_M(b) = \{ |z| \mid z \in \mathsf{Z}_M(b) \}.
\]
When we see no risk of ambiguity, we will simply write $\mathsf{Z}(b)$ and $\mathsf{L}(b)$ instead of the more cumbersome notation $\mathsf{Z}_M(b)$ and $\mathsf{L}_M(b)$, respectively. The sets $\mathsf{Z}(b)$ and $\mathsf{L}(b)$ play a crucial role in factorization theory (see~\cite{aG16}). The monoid $M$ is a \emph{finite factorization monoid} (or an \emph{FFM}) if $|\mathsf{Z}(b)| < \infty$ for every $b \in M$, while $M$ is a \emph{bounded factorization monoid} (or a \emph{BFM}) if $|\mathsf{L}(b)| < \infty$ for every $b \in M$. Each finitely generated monoid is an FFM by \cite[Proposition~2.7.8]{GH06}, and it is clear that each FFM is a BFM. In addition, $M$ is a \emph{unique factorization monoid} (or a \emph{UFM}) if $|\mathsf{Z}(b)| = 1$ for every $b \in M$, while $M$ is a \emph{half-factorial monoid} (or an \emph{HFM}) if $|\mathsf{L}(b)| = 1$ for every $b \in M$. A UFM is clearly both an FFM and an HFM, and an HFM is clearly a BFM. There are BFMs that are neither FFMs nor HFMs; see, for instance, \cite[Example~4.7]{AG20}. Finally, $M$ is a \emph{length-factorial monoid} (or an \emph{LFM}) provided that for all $b \in M$ and $z,z' \in \mathsf{Z}(b)$, the equality $|z| = |z'|$ implies that $z = z'$. The notion of length-factoriality was first considered in~\cite{CS11} under the term ``other-half-factoriality", and it has been recently investigated in~\cite{CCGS21}. For a recent survey on atomicity and factorizations in commutative monoids, see~\cite{GZ20}.

\medskip
%%%%%%%%%%%%
\subsection{Semirings}

Consider a triple $(S,+, \cdot)$, where $(S,+)$ is an additive monoid and $(S^\bullet, \cdot)$ is a multiplicative semigroup. If multiplication distributes over addition and the equalities $0 \cdot x = x \cdot 0 = 0$ hold for all $x \in S$, then $S := (S, +, \cdot)$ is a \emph{semiring}. If the semigroup $(S^\bullet, \cdot)$ is commutative, then $S$ is a \emph{commutative} semiring. Here we are interested in the atomic structure of subsemirings of $\rr_{\ge 0}$ with respect to the usual addition and multiplication of real numbers. Because we are primarily interested in atomicity, we will only consider subsemirings of $\rr_{\geq 0}$ containing $1$, the multiplicative identity.

\smallskip

\begin{definition}
	If a subsemiring of $\rr_{\geq 0}$ contains $1$, we call it a \emph{positive semiring}. If a positive semiring consists only of rational numbers (resp., algebraic numbers), we say it is a \emph{rational} (resp., an \emph{algebraic}) \emph{semiring}.
\end{definition}

\smallskip

Let $S$ be a positive semiring. Clearly, $S$ is commutative; indeed, $(S^\bullet, \cdot)$ is a monoid. %, which we will write as $(S^\bullet, \cdot)$.  
We call $(S,+)$ and $(S^\bullet, \cdot)$ the \emph{additive monoid} and the \emph{multiplicative monoid} of $S$, respectively. It is clear that $(S,+)$ is a reduced monoid. We say that a positive semiring $S$ is \emph{bi-reduced} provided that $(S^\bullet, \cdot)$ is reduced. We let $\mathcal{A}_+(S)$ and $\mathcal{A}_\times(S)$ denote  the sets of atoms of $(S,+)$ and $(S^\bullet, \cdot)$, respectively. In addition, we let $\uu(S)$ denote the set of elements of $S$ with multiplicative inverses.

\smallskip

\begin{definition}
	A positive semiring $S$ is \emph{bi-atomic} if both monoids $(S,+)$ and $(S^\bullet, \cdot)$ are atomic.
\end{definition}

\smallskip

Because the main focus of this paper is to study semirings whose additive and multiplicative monoids satisfy some of the well-known atomic properties defined in previous subsections, the following terminology will be useful in the upcoming sections. The positive semiring $S$ satisfies the \emph{bi-ACCP} provided that both monoids $(S,+)$ and $(S^\bullet, \cdot)$ satisfy the ACCP. In addition, $S$ is a \emph{bi-FFS} provided that both $(S,+)$ and $(S^\bullet, \cdot)$ are FFMs. In a similar manner, we use the terminologies \emph{bi-BFS}, \emph{bi-UFS}, \emph{bi-HFS}, and \emph{bi-LFS}.

\smallskip

Although our primary purpose is to investigate atomicity and further properties refining atomicity in positive semirings, we pause here to illustrate that one could consider atomic properties in a commutative semiring without multiplicative identity.

\smallskip

\begin{remark}\label{r:numerical-monoids}
	Observe that although every numerical monoid is a subsemiring of $\rr_{\ge 0}$, the only numerical monoid that contains $1$ and is thus a positive semiring is $\nn_0$. Let $S$ be a numerical monoid. Clearly, $(S,+)$ is an FFM. However, $(S,+)$ is an HFM if and only if $S = \nn_0$, in which case it is a UFM. In addition, $(S,+)$ is an LFM if and only if it can be generated by two elements (see~\cite[Example~2.13]{CS11} and \cite[Proposition~2.2]{fG20b}). It is similarly clear that $(S^\bullet, \cdot)$ is an FFM without identity, though should that make the reader squeamish, it does no harm to consider the monoid $(S^\bullet\cup\{1\}, \cdot)$ instead. From~\cite{BE20} we know that if $S \neq \nn_0$, then we can choose a $q \in \mathbb P \setminus S$ and an $n \in \nn_{\ge 2}$ so that whenever $p\in \mathbb P\cap S$, the elements $p, pq^{n-1}, q^n$, and $q^{2n-1}$ are multiplicative atoms of $S$. Since $p \cdot q^{2n-1} = (pq^{n-1}) \cdot q^n$, one immediately sees that $(S^\bullet, \cdot)$ is not an LFM. On the other hand, as $(q^n)^{2n-1} = (q^{2n-1})^n$, we observe that $(S^\bullet, \cdot)$ is not an HFM. As a result, every numerical monoid is bi-atomic, satisfies the bi-ACCP, and is both a bi-BFS and a bi-FFS. However, in the class of numerical monoids (treated as semirings), the equivalences bi-UFS $\Leftrightarrow$ bi-HFS  $\Leftrightarrow$ bi-LFS hold, and each of them is true precisely for the numerical monoid $\nn_0$.
\end{remark}

\medskip
%%%%%%%%%%%%%%
\subsection{Constructions}

\smallskip

Throughout, we will employ several constructions in order to both build examples of families of positive semirings satisfying various properties and to concoct examples of semirings showing that various implications do not hold. For convenience, we collect these basic constructions in this subsection.

\smallskip

\subsubsection*{Valuations of $\nn_0[x]$} By definition, every positive semiring contains $\nn_0$. Many of the examples that we consider are valuations of $\nn_0[x]$ at various $\alpha \in \mathbb R_{>0}$. Therefore it is convenient to introduce the following terminology.

\smallskip

\begin{definition}
	For each $\alpha \in \rr_{> 0}$, we call the semiring $\nn_0[\alpha] := \{f(\alpha) \mid f \in \nn_0[x]\}$ the \emph{cyclic positive semiring} generated by $\alpha$. When $\alpha$ is algebraic (resp., rational), we call $\nn_0[\alpha]$ a \emph{cyclic algebraic} (resp., \emph{rational}) \emph{semiring}.
\end{definition}

\smallskip

The additive structure of cyclic positive semirings was studied in detail in \cite{CG20}. We note that while the additive structure of $\nn_0[x]$ is simple (it is the free monoid on $\{x^n \mid n \in \nn_0\}$ and hence a UFM), the multiplicative structure, studied in \cite{CF19}, is far from simple. Nevertheless, the positive semirings $\nn_0[\alpha]$ are among the most tractable positive semirings; this is because the additive atomic structure of its members is relatively well understood, as we will corroborate throughout this paper.

\smallskip

\subsubsection*{Exponentiation} Often, it will be useful to construct, from a positive monoid $M$, a positive semiring whose multiplicative structure somehow reflects that of $M$. To do so, we invoke the following important result in transcendental number theory (see \cite[Chapter~1]{aB90}).

\smallskip

\begin{theorem}[Lindemann-Weierstrass Theorem]\label{t:L-W}
	If $\alpha_1, \dots, \alpha_n$ are distinct algebraic numbers, then the set 		$\{e^{\alpha_1}, \dots, e^{\alpha_n} \}$ is linearly independent over the algebraic numbers.
\end{theorem}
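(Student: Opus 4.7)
The plan is to prove the Lindemann--Weierstrass theorem by contradiction, following the classical Hermite method as refined by Lindemann and Weierstrass; this is the approach in Baker's monograph cited by the authors. Suppose $\{e^{\alpha_1},\dots,e^{\alpha_n}\}$ is linearly dependent over $\overline{\qq}$, so that there exist algebraic numbers $\beta_1,\dots,\beta_n$, not all zero, with
\[
\sum_{i=1}^n \beta_i\, e^{\alpha_i} = 0.
\]
The first task is to reduce to a symmetric situation in which the coefficients are ordinary (rational) integers. To do this, I would multiply through by the product of all Galois conjugates of the relation over $\qq(\alpha_1,\dots,\alpha_n,\beta_1,\dots,\beta_n)$, expand the resulting product of sums, and group the exponentials by their exponents. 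A standard symmetric-function argument (applied to both the $\alpha$'s and the $\beta$'s, separately) then produces a nontrivial relation of the form $\sum_{k=1}^m b_k\, e^{\gamma_k}=0$ where the $\gamma_k$ are distinct algebraic numbers forming a union of complete Galois orbits and the coefficients $b_k\in\zz$ are invariant under the corresponding Galois actions; after rescaling we may further assume the $\gamma_k$ are algebraic integers.

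The heart of the proof is Hermite's auxiliary construction. For a large prime $p$ (to be chosen), let $\ell$ be the leading coefficient of the minimal polynomial whose roots include all $\gamma_k$, and set
\[
f(x) \,=\, \ell^{np}\,\frac{x^{p-1}}{(p-1)!}\prod_{k=1}^{m}(x-\gamma_k)^p.
\]
Define $F(x):=\sum_{j\ge 0} f^{(j)}(x)$ (a finite sum) and
\[
I(t) \,=\, \int_0^{t} e^{t-u} f(u)\, du \,=\, e^{t} F(0) - F(t),
\]
which is verified by integration by parts. Forming the sum $J := \sum_{k=1}^m b_k\, I(\gamma_k)$ and using the hypothetical relation $\sum b_k e^{\gamma_k}=0$ kills the $e^{t}F(0)$ terms, leaving $J = -\sum_k b_k F(\gamma_k)$.

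The contradiction is now an arithmetic-versus-analytic squeeze, and this balancing is the main technical obstacle. On the arithmetic side, one shows that $J$ is a nonzero rational integer: the symmetry of $\{\gamma_k\}$ under Galois conjugation, combined with the factor $\ell^{np}$, forces $\sum_k b_k F(\gamma_k)$ to lie in $\zz$; every $f^{(j)}(\gamma_k)$ with $j<p$ vanishes, every $f^{(j)}(\gamma_k)$ with $j\ge p$ is divisible by $p$, and the single term $f^{(p-1)}(0) = \ell^{np}\prod_k(-\gamma_k)^p$ is an integer not divisible by $p$ once $p$ exceeds $|\ell\prod_k\gamma_k|$ and all $|b_k|$; hence $p\nmid J$, so $|J|\ge 1$. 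On the analytic side, trivial bounds on the integral give
\[
|I(\gamma_k)| \,\le\, |\gamma_k|\, e^{|\gamma_k|} \max_{|u|\le|\gamma_k|}|f(u)| \,\le\, \frac{C^{p}}{(p-1)!}
\]
for some constant $C$ independent of $p$, so $|J|\to 0$ as $p\to\infty$. Choosing $p$ sufficiently large yields $|J|<1$, contradicting $|J|\ge 1$ and completing the proof. The delicate step is arranging the leading-coefficient factor $\ell^{np}$ and the multiplicity $p$ in $f$ so that both the integrality of $J$ and the non-divisibility by $p$ hold simultaneously; everything else is careful bookkeeping.
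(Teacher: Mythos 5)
Your overall strategy is the classical Hermite--Lindemann--Weierstrass argument, i.e.\ precisely the proof in Baker's monograph that the paper cites in lieu of giving a proof (the paper itself proves nothing here). However, your arithmetic step has a genuine gap caused by the choice of the auxiliary polynomial. Because your single $f$ vanishes to order exactly $p$ at \emph{every} $\gamma_k$, the only values surviving in $J=-\sum_k b_k F(\gamma_k)$ are the $f^{(j)}(\gamma_k)$. The quantity $f^{(p-1)}(0)$ on which you base the claim $p\nmid J$ occurs only inside $F(0)$, and $F(0)$ is exactly the term you cancelled using the assumed relation $\sum_k b_k e^{\gamma_k}=0$; note that $0$ need not be among the $\gamma_k$ at all, and if it is, then $x^p$ divides $\prod_k(x-\gamma_k)^p$, so $f^{(p-1)}(0)=0$ anyway. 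Thus, by your own accounting, every term of $J$ is divisible by $p$ (the terms with $j<p$ vanish, those with $j\ge p$ are divisible by $p$), so the argument only yields $p\mid J$ together with $|J|<1$, i.e.\ $J=0$ for large $p$ --- no contradiction. (A smaller gloss: after multiplying the relation by its Galois conjugates you must also argue the resulting relation is nontrivial, e.g.\ by looking at the lexicographically extreme exponent, whose coefficient is a nonzero norm, and you must arrange the integer coefficients to be constant on each complete conjugate block of exponents; you assert this but it is needed explicitly below.)

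The standard repair, and what Baker actually does, is to break the symmetry at one point at a time: for each $i$ set $f_i(x)=\ell^{mp}\prod_{k}(x-\gamma_k)^p/(x-\gamma_i)$, which vanishes to order $p-1$ at $\gamma_i$ and $p$ at the other $\gamma_k$, so that the distinguished term $f_i^{(p-1)}(\gamma_i)=(p-1)!\,\ell^{mp}\prod_{k\neq i}(\gamma_i-\gamma_k)^p$ really does appear in $J_i=-\sum_k b_k F_i(\gamma_k)$, while all other contributions are divisible by $p!$. The individual $J_i$ are in general only algebraic numbers, so one passes to the product $J_1\cdots J_m$, which is a rational integer by the complete-conjugate-set symmetry (this is where the constancy of the $b_k$ on conjugate blocks is used); one shows that for all large primes $p$ this product is nonzero and divisible by $((p-1)!)^m$, hence $|J_1\cdots J_m|\ge ((p-1)!)^m$, whereas the integral estimates you already have give $|J_1\cdots J_m|\le C^{p}$ with $C$ independent of $p$ --- a contradiction for $p$ large. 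With this modification your bookkeeping goes through; as written, the single-polynomial version does not close.
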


\smallskip

In particular, if $M$ is a positive monoid consisting of algebraic numbers, then the additive monoid $E(M) := \langle e^m \mid m \in M \rangle$\footnote{By the Lindemann-Weierstrass Theorem, $E(M)$ can be naturally identified with a subsemiring of the semigroup ring with coefficients in $\zz$ and exponents in $M$.} is free on the set $\{e^m \mid m \in M\}$. In addition, one can readily see that $E(M)$ is closed under the standard multiplication and, therefore, it is a positive semiring. The multiplicative structure of~$M$ is not, in general, as nice as its additive structure. It is thus sometimes convenient to consider the multiplicative submonoid $e(M) := \{e^m \mid m \in M\}$ of $E(M)^\bullet$, which is clearly isomorphic to the monoid~$M$. As we now see, $e(M)$ is divisor-closed in $E(M)$ and hence atoms and factorizations in the smaller monoid $e(M)$ persist to the larger monoid.

\smallskip

\begin{lemma}\label{l:e(M)}
	Let $M$ be a positive monoid. Then $e(M)$ is a divisor-closed submonoid of the multiplicative monoid of the semiring $E(M)$.
\end{lemma}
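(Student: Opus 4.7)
The plan is to fix an element $e^m \in e(M)$ and a factorization $e^m = b \cdot c$ with $b, c \in E(M)^\bullet$, and then show $b \in e(M)$ (by symmetry, $c \in e(M)$ as well). A quick preliminary: $e(M)$ is indeed a multiplicative submonoid of $(E(M)^\bullet,\cdot)$, since $e^{m_1} \cdot e^{m_2} = e^{m_1 + m_2}$ with $m_1 + m_2 \in M$, and $1 = e^0 \in e(M)$ because $0 \in M$.

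The main tool is that, by the Lindemann--Weierstrass Theorem, $E(M)$ is a free additive monoid on $\{e^m \mid m \in M\}$. Accordingly, I would write
$$
b = \sum_{i=1}^{k} n_i \, e^{m_i}, \qquad c = \sum_{j=1}^{\ell} p_j \, e^{m'_j},
$$
with $n_i, p_j \in \nn$, and with $m_1,\dots,m_k$ (resp., $m'_1,\dots,m'_\ell$) pairwise distinct elements of $M$. Expanding the product in $\rr$ gives
$$
b \cdot c = \sum_{i,j} n_i p_j \, e^{m_i + m'_j},
$$
and since $m_i + m'_j \in M$ for every pair $(i,j)$, this expresses $bc$ as a nonnegative integer combination of basis elements of $E(M)$.

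Now I would compare this with $bc = e^m = 1 \cdot e^m$ and invoke the freeness of $E(M)$: every basis element $e^s$ with $s \neq m$ must receive total coefficient $0$, while $e^m$ must receive total coefficient $1$. Crucially, each $n_i p_j \geq 1$, so there is no cancellation: the first condition forces $m_i + m'_j = m$ for every $(i,j)$. Fixing $j=1$, the equation $m_i = m - m'_1$ then shows all $m_i$ are equal, hence $k = 1$ by distinctness; symmetrically $\ell = 1$. Finally, the coefficient condition $n_1 p_1 = 1$ in $\nn$ gives $n_1 = p_1 = 1$, so $b = e^{m_1} \in e(M)$.

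The only subtlety I anticipate is keeping clear which monoid structure is in play at each step (the additive freeness of $E(M)$ versus the multiplicative divisibility in question). Once one is careful that the coefficients $n_i, p_j$ are strictly positive integers, so that no cross-term $n_i p_j e^{m_i + m'_j}$ can vanish, the uniqueness of representations in the free monoid $E(M)$ forces the argument.
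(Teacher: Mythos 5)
Your proof is correct and follows essentially the same route as the paper's: both rely on the Lindemann--Weierstrass freeness of $(E(M),+)$ on $\{e^m \mid m \in M\}$, expand the product of the two factors, and use the strict positivity of the integer coefficients to rule out cancellation, forcing each factor to be a single basis element with coefficient $1$. The only cosmetic difference is that the paper singles out the leading (largest-exponent) term after ordering the exponents, whereas you argue that all cross exponents must equal $m$ and then cancel; the substance is the same.
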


\begin{proof}
	Suppose that $e^m\in e(M)$ (necessarily with $m\in M$), and write $e^m = \big(\sum_{i=1}^m c_i e^{k_i} \big) \big(\sum_{j=1}^n c'_j e^{l_j} \big)$ in $E(M)$ assuming that $k_1 > \dots > k_m$ and $l_1 > \dots > l_n$ and taking the coefficients $c_1, \dots, c_m$ and $c'_1, \dots, c'_n$ to be positive integers. Because $(E(M),+)$ is the free monoid on $\{e^m \mid m \in M\}$ by Theorem~\ref{t:L-W}, we see that $e^m = (c_1e^{k_1}) (c'_1e^{l_1}) = c_1 c'_1 e^{k_1+l_1}$. As a result, $c_1 = c'_1 = 1$. Hence $\sum_{i=1}^m c_i e^{k_i} = e^{k_1}$ and $\sum_{j=1}^n c'_j e^{l_j} = e^{l_1}$ both belong to $e(M)$.
\end{proof}

\bigskip
%%%%%%%%%%
%%%%%%%%%%
\section{Atomicity}
\label{sec:atomicity}

\smallskip

The primary purposes of this section are to identify classes of bi-atomic positive semirings and to explore their atomic structure. These classes consist of real evaluations of the cyclic free semiring $\nn_0[x]$ of polynomials with nonnegative integer coefficients (various factorization aspects of the multiplicative structure of $\nn_0[x]$ were recently studied by F. Campanini and A. Facchini in~\cite{CF19}).

\smallskip

First, we present an easy result to illustrate how the multiplicative structure of a positive semiring interlaces with the atomicity of its additive structure.

\smallskip

\begin{prop} \label{prop:atoms of general positive semirings}
	For a positive semiring $S$, the following statements are equivalent.
	\begin{enumerate}
		\item[(a)] $\uu(S) \subseteq \mathcal{A}_+(S)$.
		\smallskip
		
		\item[(b)] $1 \in \mathcal{A}_+(S)$.
		\smallskip
		
		\item[(c)] $\mathcal{A}_+(S)$ is nonempty.
	\end{enumerate}
\end{prop}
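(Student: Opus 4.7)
The plan is to prove the cycle (a) $\Rightarrow$ (b) $\Rightarrow$ (c) $\Rightarrow$ (b) $\Rightarrow$ (a), where the first two implications are immediate and the substantive content lies in the loop back from (c) to (a). Since $(S,+)$ is reduced (every element of $S$ is nonnegative, so only $0$ is additively invertible), recall that an element $b \in S^\bullet$ lies in $\mathcal{A}_+(S)$ precisely when there is no decomposition $b = s + t$ with $s, t \in S_{>0}$.

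The implication (a) $\Rightarrow$ (b) is free: since $1$ is the multiplicative identity of $S$, it lies in $\uu(S)$, and hence in $\mathcal{A}_+(S)$ by hypothesis. The implication (b) $\Rightarrow$ (c) is also immediate. So it suffices to close the loop by establishing (c) $\Rightarrow$ (b) and (b) $\Rightarrow$ (a).

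For (c) $\Rightarrow$ (b), fix any $a \in \mathcal{A}_+(S)$ (which exists by hypothesis), and suppose toward a contradiction that $1 \notin \mathcal{A}_+(S)$. Since $1 \in S_{>0}$ and $(S,+)$ is reduced, this means we may write $1 = s + t$ with $s, t \in S_{>0}$. Multiplying through by $a$, which is allowed because $S$ is a semiring, yields $a = as + at$, and since $a, s, t$ are all positive real numbers, both $as$ and $at$ lie in $S_{>0}$. This contradicts the fact that $a \in \mathcal{A}_+(S)$. Hence $1 \in \mathcal{A}_+(S)$.

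For (b) $\Rightarrow$ (a), fix $u \in \uu(S)$; so $u \in S_{>0}$ and its multiplicative inverse $u^{-1}$ belongs to $S$. Suppose $u = s + t$ for some $s, t \in S_{>0}$. Multiplying by $u^{-1}$ (which is itself a positive real number) gives $1 = u^{-1} s + u^{-1} t$ with $u^{-1} s$ and $u^{-1} t$ both in $S_{>0}$, contradicting (b). Therefore $u \in \mathcal{A}_+(S)$, and (a) follows. The only mild subtlety throughout is the observation that distributivity together with positivity of all factors means multiplication by a strictly positive element of $S$ preserves the non-triviality of any additive decomposition; this is the single mechanism driving both substantive implications.
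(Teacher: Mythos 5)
Your proof is correct and uses the same mechanism as the paper: multiply a nontrivial additive decomposition by a positive element ($a$ for $1 = s+t$, then $u^{-1}$ for $u = s_u + t_u$) and invoke reducedness of $(S,+)$. The only cosmetic difference is that you split the paper's single implication (c) $\Rightarrow$ (a) into (c) $\Rightarrow$ (b) followed by (b) $\Rightarrow$ (a), but the two steps are exactly the paper's two paragraphs of that argument.
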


\begin{proof}
	(a) $\Rightarrow$ (b) $\Rightarrow$ (c): Both implications follow immediately.
	\smallskip
	
	(c) $\Rightarrow$ (a): Suppose that $\mathcal{A}_+(S)$ is nonempty, and take $a \in \mathcal{A}_+(S)$. Write $1 = s+t$ for some $s,t \in S$. Since $a = sa + ta$ and $(S,+)$ is a reduced monoid, either $s=0$ or $t=0$. Hence $1 \in \mathcal{A}_+(S)$. Now take $u \in \uu(S)$, and suppose that $u = s_u + t_u$ for some $s_u, t_u \in S$. Then the equality $1 = u^{-1}s_u + u^{-1}t_u$ ensures that either $s_u = 0$ or $t_u = 0$, and so $u \in \mathcal{A}_+(S)$. Thus, $\uu(S) \subseteq \mathcal{A}_+(S)$.
\end{proof}

\smallskip

We now consider the atomicity of cyclic positive semirings. First, observe that not every such a semiring is bi-reduced: for instance, $2$ is a multiplicative unit of the positive semiring $\nn_0[1/2]$. Note, in addition, that $(\nn_0[1/2],+)$ is not atomic. Indeed, bi-reduced semirings $\nn_0[\alpha]$ can be characterized as those whose additive monoids are atomic.

\smallskip

\begin{prop} \label{prop:bi-reducedness characterization of PCS}
	For $\alpha \in \rr_{> 0}$, the positive semiring $\nn_0[\alpha]$ is bi-reduced if and only if $(\nn_0[\alpha],+)$ is atomic.
\end{prop}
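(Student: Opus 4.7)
The plan is to prove the equivalence through the intermediate condition that $1$ is an additive atom of $\nn_0[\alpha]$, showing separately that each of the two conditions in the statement is equivalent to this.

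For the equivalence between being bi-reduced and having $1$ as an atom, I would argue both directions by contrapositive. Assume $\nn_0[\alpha]$ admits a unit $u \neq 1$, and write $u = g(\alpha)$ and $u^{-1} = f(\alpha)$ for some $f, g \in \nn_0[x]$. The polynomial $h := fg \in \nn_0[x]$ satisfies $h(\alpha) = 1$ and, because $u \neq 1$, cannot equal the constant polynomial $1$ (else $f = g = 1$ in $\nn_0[x]$, forcing $u = 1$). A routine case analysis of the coefficient pattern of $h$---separating the cases of multiple nonzero coefficients, a single coefficient of size at least $2$, and a single coefficient equal to $1$ (which forces $\alpha = 1$ and so $u \in \uu(\nn_0) = \{1\}$, a contradiction)---then produces a decomposition of $1$ into two nonzero summands of $\nn_0[\alpha]$, showing $1 \notin \mathcal{A}_+(\nn_0[\alpha])$. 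Conversely, if $1 = a + b$ with $a, b \in (\nn_0[\alpha])^\bullet$, writing $a = f(\alpha)$ and $b = g(\alpha)$ for nonzero $f, g \in \nn_0[x]$ and setting $h := f + g$ yields $h(\alpha) = 1$ with $h \neq 1$; forcing the constant coefficient $e_0$ of $h$ to vanish (by positivity of the remaining summands) gives $h(x) = x\,q(x)$ for some nonzero $q \in \nn_0[x]$, and the identity $\alpha \cdot q(\alpha) = 1$ then exhibits $\alpha$ as a multiplicative unit, necessarily distinct from $1$ (since $\alpha = 1$ would give $\nn_0[\alpha] = \nn_0$ and $1 \in \mathcal{A}_+$ directly).

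For the equivalence between atomicity of $(\nn_0[\alpha], +)$ and having $1$ as an atom, the forward implication is immediate from Proposition~\ref{prop:atoms of general positive semirings}. For the reverse, if $\alpha$ is transcendental then $\nn_0[\alpha] \cong \nn_0[x]$ is additively free on $\{x^n : n \in \nn_0\}$ and hence atomic, while if $\alpha$ is algebraic I would invoke the structural analysis of $(\nn_0[\alpha], +)$ carried out in \cite{CG20}, which shows that once $1$ is an atom each element of the canonical generating set $\{\alpha^k\}_{k \geq 0}$ admits a finite decomposition into additive atoms.

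The main obstacle is precisely this last step: the implication that $1 \in \mathcal{A}_+(\nn_0[\alpha])$ entails atomicity of $(\nn_0[\alpha], +)$ in the algebraic case. Proposition~\ref{prop:atoms of general positive semirings} only equates nonemptiness of $\mathcal{A}_+(\nn_0[\alpha])$ with $1 \in \mathcal{A}_+(\nn_0[\alpha])$; it does not rule out a nonempty but non-generating atom set. Excluding this pathology for algebraic $\alpha$ will rely on the inductive arguments of \cite{CG20} tied to the minimal polynomial of $\alpha$, and this is the technically nontrivial piece of the proof.
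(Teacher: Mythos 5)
Your proposal is correct and follows essentially the same route as the paper: both arguments pivot on the intermediate condition $1 \in \mathcal{A}_+(\nn_0[\alpha])$, with the unit/atom interplay handled by the same coefficient bookkeeping (the paper argues directly where you argue by contrapositive, a cosmetic difference). The step you single out as the main obstacle --- that $1 \in \mathcal{A}_+(\nn_0[\alpha])$ forces $(\nn_0[\alpha],+)$ to be atomic --- is not reproved in the paper either; it is delegated to \cite[Theorem~4.1]{CG20}, exactly as you propose.
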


\begin{proof}
	The statement of the proposition is clear when $\alpha = 1$. Thus, we assume that $\alpha \neq 1$. For the direct implication, assume that the semiring $\nn_0[\alpha]$ is bi-reduced. Suppose that $1 = \beta + \gamma$ in $(\nn_0[\alpha],+)$, and write $\beta = \sum_{i=0}^m b_i \alpha^i$ and $\gamma = \sum_{i=0}^n c_i \alpha^i$ for some $b_0, \dots, b_m, c_0, \dots, c_n \in \nn_0$. The equalities $b_0 = 0$ and $c_0 = 0$ cannot hold simultaneously as, in that case, $\alpha$ would be a multiplicative divisor of $1$ and, therefore, a unit in $(\nn_0[\alpha]^\bullet, \cdot)$. Hence either $b_0 \ge 1$ or $c_0 \ge 1$, and so $1 = \beta + \gamma$ implies that $\{\beta, \gamma\} = \{0,1\}$. Consequently, $1 \in \mathcal{A}_+(\nn_0[\alpha])$, and so it follows from \cite[Theorem~4.1]{CG20} that $(\nn_0[\alpha],+)$ is atomic.
	
	Conversely, suppose that $(\nn_0[\alpha],+)$ is atomic and so that $1 \in \mathcal{A}_+(\nn_0[\alpha])$. Let $u = \sum_{i=0}^m b_i \alpha^i$ be a multiplicative unit of the semiring $\nn_0[\alpha]$. Take $v = \sum_{i=0}^n c_i \alpha^i$ in $\nn_0[\alpha]^\bullet$ such that $uv = 1$. Since $1 \in \mathcal{A}_+(\nn_0[\alpha])$, it follows that $\sum_{i=0}^m b_i = \sum_{i=0}^n c_i = 1$. This implies that $u = \alpha^j$ and $v = \alpha^k$ for some $j,k \in \nn_0$. As $\alpha \neq 1$, the equality $1 = \alpha^{j+k}$ ensures that $j = k = 0$, and so $u=1$. Thus, $1$ is the only multiplicative unit of $\nn_0[\alpha]$, and so the positive semiring $\nn_0[\alpha]$ is bi-reduced.
\end{proof}

\smallskip

As the following example illustrates, Proposition~\ref{prop:bi-reducedness characterization of PCS} cannot be extended to the class of all positive semirings.

\smallskip

\begin{example}
	 Take $q \in \qq_{> 0}$ such that neither $q$ nor $q^{-1}$ is an integer. Since $\nn_0[x,y]$ is a semiring, its evaluation $\nn_0[q, q^{-1}]$ at $(x,y) = (q, q^{-1})$ is a positive semiring. It is not hard to verify that $1 \in \mathcal{A}_+(\nn_0[q,q^{-1}])$. Indeed, the equality $\mathcal{A}_+(\nn_0[q, q^{-1}]) = \{q^n \mid n \in \zz\}$ follows from the proof of \cite[Proposition~3.5]{fG18}, and so $(\nn_0[q,q^{-1}],+)$ is atomic. However, the semiring $\nn_0[q,q^{-1}]$ is not bi-reduced as, for instance, $q$ is a multiplicative unit.
\end{example}

\smallskip

As mentioned before, the atomic structure of $(\nn_0[\alpha], +)$ is very tractable. For instance, we will see in the next proposition that the set $\mathcal{A}_+(\nn_0[\alpha])$ is well structured and not hard to describe. For each $\alpha \in \rr_{> 0}$, set
\[
	n(\alpha) :=  \min \{n \in \nn \mid \alpha^n \in \langle \alpha^j \mid j \in \ldb 0,n-1 \rdb \rangle \} \in \nn \cup \{\infty\}
\]
if $(\nn_0[\alpha],+)$ is atomic, and set $n(\alpha) = 0$ otherwise.

\smallskip

\begin{prop}  \label{prop:additive atoms of CPS}
	Let $\alpha \in \rr_{> 0}$ be an algebraic number with minimal polynomial $m(x)$. Then the following statements hold.
	\begin{enumerate}
		\item $\mathcal{A}_+(\nn_0[\alpha]) = \{ \alpha^n \mid n \in \ldb 0, n(\alpha) - 1 \rdb\}$.
		\smallskip
		
		\item If $(\nn_0[\alpha],+)$ is atomic, then $\{ \alpha^n \mid n \in \ldb 0, \deg m(x) - 1 \rdb \} \subseteq \mathcal{A}_+(\nn_0[\alpha])$.
	\end{enumerate}
\end{prop}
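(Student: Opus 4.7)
The plan is to prove (1) by identifying the set of additive atoms of $\nn_0[\alpha]$ as an initial segment of the powers of $\alpha$, and then to deduce (2) from (1) via a short polynomial-minimality observation.

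For part (1), if $(\nn_0[\alpha], +)$ is not atomic then, by Proposition~\ref{prop:atoms of general positive semirings}, $\mathcal{A}_+(\nn_0[\alpha])$ is empty; since $n(\alpha) = 0$ in this case, the right-hand side $\{\alpha^n : n \in \ldb 0, -1 \rdb\}$ is also empty. So I would then assume that $(\nn_0[\alpha], +)$ is atomic. Because this is a reduced monoid generated by $\{\alpha^n : n \in \nn_0\}$, every atom has the form $\alpha^n$, and it therefore suffices to identify the set $I := \{n \in \nn_0 : \alpha^n \in \mathcal{A}_+(\nn_0[\alpha])\}$.

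The crux is to show that $I$ is an initial segment of $\nn_0$. For this I would prove that if $\alpha^n$ is not an atom, then neither is $\alpha^{n+1}$: given a splitting $\alpha^n = x + y$ with $x, y \in (\nn_0[\alpha])^\bullet$, multiplying by $\alpha$ and using distributivity yields $\alpha^{n+1} = \alpha x + \alpha y$, where $\alpha x$ and $\alpha y$ are both nonzero elements of $\nn_0[\alpha] \subseteq \rr_{\ge 0}$. Combined with $0 \in I$ (a consequence of Proposition~\ref{prop:atoms of general positive semirings}), this forces $I = \ldb 0, n_a \rdb$ for some $n_a \in \nn_0 \cup \{\infty\}$. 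To match $n_a + 1$ with $n(\alpha)$, I would argue: if $n_a$ is finite, atomicity writes $\alpha^{n_a+1}$ as a sum of atoms $\alpha^j$ with $j \leq n_a$, so $\alpha^{n_a+1} \in \langle \alpha^j : j \leq n_a \rangle$ and thus $n(\alpha) \leq n_a + 1$; conversely, if $\alpha^{n_a}$ belonged to $\langle \alpha^j : j < n_a \rangle$ then, using $\alpha > 0$ and $\alpha \neq 1$ (so that no single lower power can equal $\alpha^{n_a}$), such a representation would be nontrivial and contradict $n_a \in I$. Hence $n(\alpha) = n_a + 1$, with the analogous conclusion $n(\alpha) = \infty$ when $n_a = \infty$.

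For part (2), I would show $n(\alpha) \geq \deg m(x)$ and then appeal to (1). Any relation $\alpha^n = \sum_{j=0}^{n-1} c_j \alpha^j$ with $c_j \in \nn_0$ yields a nonzero polynomial $p(x) := x^n - \sum_{j=0}^{n-1} c_j x^j \in \zz[x]$ annihilating $\alpha$, and minimality of $m(x)$ forces $\deg p \geq \deg m(x)$, i.e., $n \geq \deg m(x)$. Hence $\alpha^n \notin \langle \alpha^j : j < n \rangle$ whenever $n < \deg m(x)$, giving $n(\alpha) \geq \deg m(x)$; combined with (1) this yields the claimed inclusion. The main obstacle is the downward-closedness step in part (1); the argument itself is brief, but the crucial insight is that multiplication by $\alpha$ --- an operation coming from the multiplicative side of the semiring --- propagates non-atomicity through the sequence $\alpha^0, \alpha^1, \alpha^2, \ldots$, so that the algebraicity of $\alpha$ is only needed in part (2), to lower-bound $n(\alpha)$ by $\deg m(x)$.
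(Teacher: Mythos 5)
Your proof is correct, but it takes a genuinely different route from the paper. For part (1), the paper simply invokes \cite[Theorem~4.1]{CG20} in the atomic case, whereas you prove the structure of $\mathcal{A}_+(\nn_0[\alpha])$ from scratch; your key observation---that multiplying a nontrivial additive decomposition of $\alpha^n$ by $\alpha$ produces one of $\alpha^{n+1}$, so non-atomicity propagates upward along the powers---is exactly the kind of interplay between the two operations of the semiring that makes the argument self-contained, and it correctly isolates algebraicity as being needed only in part (2). For part (2), the paper argues directly, splitting into the cases $\alpha<1$ and $\alpha>1$ and using size/divisibility considerations (in the first case reducing to $1\in\mathcal{A}_+(\nn_0[\alpha])$, in the second to a degree contradiction with $m(x)$), while you deduce (2) cleanly from (1) together with the one-line bound $n(\alpha)\ge \deg m(x)$ coming from the monic polynomial $x^n-\sum_{j<n}c_jx^j$; this is arguably tidier and avoids the case analysis, at the cost of relying on your internally proven version of (1) rather than the cited result. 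Two small points to patch: (i) your identification $n(\alpha)=n_a+1$ uses $\alpha\neq 1$, but $\alpha=1$ is algebraic and is covered by the statement (there all powers coincide, $I=\nn_0$ while $n(\alpha)=1$), so dispose of $\alpha=1$ separately at the outset, where $\nn_0[\alpha]=\nn_0$ makes (1) and (2) trivial; (ii) to get $n(\alpha)\ge n_a+1$ you should run your ``no atom power lies in the monoid generated by strictly smaller powers'' argument for every $n\in\ldb 1,n_a\rdb$ (equivalently, apply it to $n=n(\alpha)$ under the assumption $n(\alpha)\le n_a$), not only for $n=n_a$; the argument is verbatim the same, so this is a quantifier fix rather than a gap.
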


\begin{proof}
	(1) If the additive monoid of the positive semiring $\nn_0[\alpha]$ is not atomic, then Proposition~\ref{prop:atoms of general positive semirings} ensures that $1 \notin \mathcal{A}_+(\nn_0[\alpha])$, from which the equality in~(1) trivially follows. The same equality holds from \cite[Theorem~4.1]{CG20} in the case when $(\nn_0[\alpha],+)$ is atomic.
	\smallskip

	(2) Note that when $(\nn_0[\alpha],+)$ is atomic, $1 \in \mathcal{A}_+(\nn_0[\alpha])$ by Proposition~\ref{prop:atoms of general positive semirings}. If $\alpha = 1$, then $\nn_0[\alpha] = \nn_0$, and the inclusion follows trivially. For $\alpha \neq 1$, we consider two cases:
	\smallskip
	
	\noindent {\it Case 1:} $\alpha < 1$. Fix $j \in \ldb 0, \deg m(x) - 1 \rdb$. Since $\alpha < 1$, we see that $\alpha^n$ does not divide $\alpha^j$ in $\nn_0[\alpha]$ for any $n \in \nn_{< j}$. Now suppose that $\alpha^j = \sum_{i=j}^n c_i \alpha^i$ for some $n \in \nn_{\ge j}$ and $c_j, \dots, c_n \in \nn_0$ with $c_n > 0$. If $n > j$, then $1 = \sum_{i=j}^n c_i \alpha^{i-j}$, in which case, $1 \notin \mathcal{A}_+(\nn_0[\alpha])$. So $n = j$ and $c_j = 1$. Thus, $\alpha^j \in \mathcal{A}_+(\nn_0[\alpha])$.
	\smallskip
	
	\noindent {\it Case 2:} $\alpha > 1$. Suppose, by way of contradiction, that $\alpha^j \notin \mathcal{A}_+(\nn_0[\alpha])$ for some $j \in \ldb 0, \deg m(x) - 1 \rdb$. This implies that  $j \ge 1$. Observe that $\alpha^n$ does not divide $\alpha^j$ in $\nn_0[\alpha]$ for any $n \in \nn$ with $n > j$ because $\alpha > 1$. Therefore $\alpha^j = \sum_{i=0}^{j-1} c_i \alpha^i$ for $c_0, \dots, c_{j-1} \in \nn_0$, and so $\alpha$ is a root of the polynomial $f(x) = x^j - \sum_{i=0}^{j-1} c_i x^i \in \qq[x]$. Then $f(x)$ is a nonzero polynomial of degree strictly less than $\deg m(x)$ having $\alpha$ as a root. However, this contradicts the minimality of $m(x)$. As a consequence, $\{ \alpha^n \mid n \in \ldb 0, \deg m(x) - 1 \rdb \} \subseteq \mathcal{A}_+(\nn_0[\alpha])$.
\end{proof}

\smallskip

The following corollary, which was first proved in~\cite{CG20}, is an immediate consequence of  Proposition~\ref{prop:atoms of general positive semirings} and part~(1) of Proposition~\ref{prop:additive atoms of CPS}.

\smallskip

\begin{cor} \label{cor:atomic characterization of cyclic algebraic semirings}
	For $\alpha \in \rr_{> 0}$, the additive monoid of the semiring $\nn_0[\alpha]$ is atomic if and only if $\mathcal{A}_+(\nn_0[\alpha])$ is nonempty, in which case, $1 \in \mathcal{A}_+(\nn_0[\alpha])$.
\end{cor}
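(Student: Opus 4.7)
The plan is to invoke Proposition~\ref{prop:atoms of general positive semirings} to handle the ``in which case'' clause and part~(1) of Proposition~\ref{prop:additive atoms of CPS} to handle the backward implication of the equivalence; the forward implication requires only an elementary observation about reduced atomic monoids. Once the pieces are lined up, the corollary becomes essentially bookkeeping.

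First I would dispose of the forward direction. The monoid $(\nn_0[\alpha],+)$ is reduced, since $\nn_0[\alpha] \subseteq \rr_{\ge 0}$ forces $0$ to be the only additive unit, so $1$ is a non-identity element. If the monoid is atomic, then $1$ must be expressible as a nonempty finite sum of elements of $\mathcal{A}_+(\nn_0[\alpha])$, and so $\mathcal{A}_+(\nn_0[\alpha]) \neq \emptyset$. The ``in which case'' clause then follows at once from the equivalence (b) $\Leftrightarrow$ (c) in Proposition~\ref{prop:atoms of general positive semirings} applied to $S = \nn_0[\alpha]$: nonemptiness of $\mathcal{A}_+(\nn_0[\alpha])$ forces $1 \in \mathcal{A}_+(\nn_0[\alpha])$.

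For the converse I would argue by contrapositive, splitting on whether $\alpha$ is algebraic. If $\alpha$ is algebraic, Proposition~\ref{prop:additive atoms of CPS}(1) applies: should $(\nn_0[\alpha],+)$ fail to be atomic, the stipulated convention sets $n(\alpha) = 0$, whence the formula $\mathcal{A}_+(\nn_0[\alpha]) = \{\alpha^n \mid n \in \ldb 0, n(\alpha) - 1 \rdb\}$ yields $\mathcal{A}_+(\nn_0[\alpha]) = \emptyset$, contradicting the hypothesis. If instead $\alpha$ is transcendental, the powers $\{\alpha^n \mid n \in \nn_0\}$ are $\qq$-linearly independent, so $(\nn_0[\alpha],+)$ is the free monoid on $\{\alpha^n \mid n \in \nn_0\}$; it is therefore trivially atomic, and in this case the backward implication holds unconditionally.

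There is no substantive obstacle in this argument; the only mild subtlety is that Proposition~\ref{prop:additive atoms of CPS}(1) is stated only for algebraic $\alpha$, so the transcendental case must be addressed separately. That case, however, is dispatched in a single line by invoking algebraic independence of $\{\alpha^n\}$ over $\qq$, so the overall proof is genuinely immediate from the two cited results.
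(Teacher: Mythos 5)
Your proof is correct and takes essentially the same route as the paper, which states the corollary as an immediate consequence of Proposition~\ref{prop:atoms of general positive semirings} and part~(1) of Proposition~\ref{prop:additive atoms of CPS}. Your explicit handling of the transcendental case (where $(\nn_0[\alpha],+)$ is free on $\{\alpha^n \mid n \in \nn_0\}$ and hence atomic) is a reasonable extra precaution, since Proposition~\ref{prop:additive atoms of CPS} is stated only for algebraic $\alpha$, a point the paper glosses over.
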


\smallskip

The additive structure of cyclic rational semirings $\nn_0[q]$ (for $q \in \qq_{>0}$), along with various factorization invariants, has been recently investigated in~\cite{CGG20}.

\smallskip

\begin{example} \label{ex:multiplicatively cyclic monoids atomicity}
	For $q \in \qq_{> 0}$, consider the monoid $\nn_0[q]$. Assume that $q \notin \nn$ as, otherwise, $\nn_0[q] = \nn_0$ has trivial atomic structure. If $\mathsf{n}(q) = 1$, then $1 = \mathsf{d}(q)q$ and so it follows from Proposition~\ref{prop:cyclic atomic characterization} that $\nn_0[q]$ is antimatter. Then assume that $\mathsf{n}(q) > 1$. One can readily verify that $1 = \sum_{i=1}^n c_i q^i$ for some $c_1, \dots, c_n \in \nn_0$ would imply that $\mathsf{n}(q) \mid \mathsf{d}(q)$, which is not possible. Hence $1 \in \mathcal{A}_+(\nn_0[q])$, and so $\nn_0[q]$ is atomic by Corollary~\ref{cor:atomic characterization of cyclic algebraic semirings}. Similarly, one can check that $q^n = \sum_{i=0}^{n-1} c_i q^i$ for some $c_0, \dots, c_{n-1} \in \nn_0$ would imply that $\mathsf{d}(q) \mid \mathsf{n}(q)$. Thus, it follows from Proposition~\ref{prop:additive atoms of CPS} that $\mathcal{A}_+(\nn_0[q]) = \{q^n \mid n \in \nn_0\}$.
\end{example}

\smallskip

We proceed to identify two classes of bi-atomic semirings.

\smallskip

\begin{prop} \label{prop:cyclic atomic characterization} \label{prop:cyclic atomicity sufficient conditions}
	For $\alpha \in \rr_{> 0}$, the following statements hold.
	\begin{enumerate}
		\item If $\alpha$ is transcendental, then the semiring $\nn_0[\alpha]$ is bi-atomic.
		\smallskip
		
		\item If $\alpha$ is algebraic and $\alpha \ge 1$, then the semiring $\nn_0[\alpha]$ is bi-atomic.
	\end{enumerate}
\end{prop}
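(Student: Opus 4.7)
For part (1), the key observation is that when $\alpha$ is transcendental, the evaluation map $f \mapsto f(\alpha)$ gives a semiring isomorphism $\nn_0[x] \to \nn_0[\alpha]$, so it suffices to verify bi-atomicity in $\nn_0[x]$. The additive monoid $(\nn_0[x],+)$ is the free commutative monoid on $\{x^n : n \in \nn_0\}$ and in particular is atomic. For the multiplicative monoid $(\nn_0[x]^\bullet, \cdot)$, I would argue by strong induction on the quantity $\nu(f) := \deg f + f(1)$, where $f(1)$ is the sum of the coefficients of $f$; the only polynomial with $\nu(f) = 1$ is $f = 1$, which is the unique unit. For a non-unit $f = gh$ with $g,h$ both non-units, the identities $\deg(gh) = \deg g + \deg h$ and $(gh)(1) = g(1)h(1)$, combined with the fact that every non-unit $h$ satisfies $\deg h \ge 1$ or $h(1) \ge 2$, force $\nu(g), \nu(h) < \nu(f)$, closing the induction.

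For part (2), the case $\alpha = 1$ gives $\nn_0[\alpha] = \nn_0$ and is trivial, so I would assume $\alpha > 1$. On the additive side, I would verify $1 \in \mathcal{A}_+(\nn_0[\alpha])$ directly: any relation $1 = \sum_{i=0}^n c_i \alpha^i$ with some $c_j \ge 1$ for $j \ge 1$ would force the right-hand side to be at least $\alpha > 1$, a contradiction. Corollary~\ref{cor:atomic characterization of cyclic algebraic semirings} then gives atomicity of $(\nn_0[\alpha],+)$. For the multiplicative side, Proposition~\ref{prop:bi-reducedness characterization of PCS} yields that $\nn_0[\alpha]$ is bi-reduced, so the unique unit of $(\nn_0[\alpha]^\bullet, \cdot)$ is $1$ and every non-unit lies in $(1,\infty)$. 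The crucial observation is that $\nn_0[\alpha] \cap [0,R]$ is finite for every $R > 0$: in any representation $\sum c_i \alpha^i \le R$ with $c_i \in \nn_0$, the index $i$ must satisfy $\alpha^i \le R$ whenever $c_i > 0$, and each nonzero $c_i$ is at most $R$, leaving only finitely many tuples. Consequently $\nn_0[\alpha]$ inherits a well-ordering from $\rr_{\ge 0}$. By strong induction along this well-ordering, any non-unit $\beta$ that is not already an atom admits a factorization $\beta = \gamma \delta$ with $\gamma, \delta$ non-units; since $\gamma, \delta > 1$ we have $\gamma, \delta < \beta$, and the inductive hypothesis writes each factor as a product of atoms.

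The principal obstacle is the multiplicative atomicity in part (2). The well-ordering of $\nn_0[\alpha]$ for $\alpha > 1$ is what makes the descending-chain style induction go through; without such discreteness (as happens, for instance, when $\alpha < 1$) non-units can accumulate at $1$ and the argument collapses. A tempting alternative would be to use field norms on $\qq(\alpha)$, but because $\nn_0[\alpha]$ may well contain non-trivial units of the surrounding order $\zz[\alpha]$, the $|N|$-valuation does not immediately separate atoms from non-atoms, which makes the discreteness-based approach the cleaner route.
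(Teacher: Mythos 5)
Your proposal is correct, and its skeleton matches the paper's: part (1) is handled exactly as in the paper via the semiring isomorphism $\nn_0[\alpha] \cong \nn_0[x]$ (your induction on $\deg f + f(1)$ just makes explicit the ``simple degree consideration'' the paper leaves to the reader, since degree alone does not handle constant factors), and part (2) rests on the same key fact, namely the discreteness of $\nn_0[\alpha]$ for $\alpha > 1$ (finitely many elements in every bounded interval). The two places where you genuinely deviate are worth noting. On the additive side of part (2), the paper deduces atomicity of $(\nn_0[\alpha],+)$ directly from discreteness (the argument later formalized in Theorem~\ref{thm:BF sufficient condition}, which in fact yields the stronger BFM property), whereas you verify $1 \in \mathcal{A}_+(\nn_0[\alpha])$ and invoke Corollary~\ref{cor:atomic characterization of cyclic algebraic semirings}; that is legitimate (the corollary precedes the proposition and does not depend on it), but it routes through the nontrivial external result \cite[Theorem~4.1]{CG20}, while the paper's argument is lighter and self-contained. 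On the multiplicative side, the paper transports the problem via $\ln$ to the positive monoid $(\ln(\nn_0[\alpha]^\bullet),+)$ and applies the same discreteness criterion from \cite{fG19}, whereas you run a direct well-founded induction on $\nn_0[\alpha]^\bullet$ using that every non-unit exceeds $1$; your version is more elementary and self-contained, the paper's has the advantage of reusing a single general criterion (and implicitly gives boundedness of factorization lengths, not just atomicity). Your closing remark about why the argument must fail for $\alpha < 1$ is consistent with the paper's later examples.
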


\begin{proof}
	(1) When $\alpha$ is transcendental, the kernel of the ring homomorphism $\varphi \colon \zz[x] \to \rr$ consisting in evaluating at $\alpha$ is trivial. As $\varphi(\nn_0[x]) = \nn_0[\alpha]$, we obtain that $\nn_0[\alpha] \cong \nn_0[x]$ as semirings. The monoid $(\nn_0[x], +)$ is free and thus atomic. On the other hand, a simple degree consideration shows that $(\nn_0[x]^\bullet, \cdot)$ is atomic. Hence the semiring $\nn_0[\alpha]$ is bi-atomic.
	\smallskip
	
	(2) If $\alpha = 1$, then $\nn_0[\alpha] = \nn_0$ is clearly bi-atomic. Suppose now that $\alpha > 1$. Since for each $n \in \nn$, the interval $[0,n]$ contains only finitely many elements of $\nn_0[\alpha]$, the monoid $(\nn_0[\alpha],+)$ must be atomic (this argument will become more transparent in Theorem~\ref{thm:BF sufficient condition}). For a similar reason, $(\ln(\nn_0[\alpha]^\bullet),+)$ is atomic (as usual, $``\ln"$ denotes the logarithm base $e$). Since the monoid $(\nn_0[\alpha]^\bullet, \cdot)$ is clearly isomorphic to $ (\ln(\nn_0[\alpha]^\bullet),+)$, it must be atomic. Thus, $\nn_0[\alpha]$ is bi-atomic.
\end{proof}

\smallskip

With the notation as in Proposition~\ref{prop:cyclic atomic characterization}, it is also true that $\nn_0[\alpha]$ is bi-atomic even for some values $\alpha<1$. For instance, we shall see in Proposition~\ref{prop:infinitely many bi-atomic PCS without the ACCP} that the rational cyclic semiring $\nn_0[2/3]$ is bi-atomic. It is known that if $S$ is a bi-atomic rational semiring, then $|\mathcal{A}_+(S)| \in \{0,1,\infty\}$ (see \cite[Proposition~3.6]{BG20}). This is no longer the case for algebraic semirings, as the following proposition indicates.

\smallskip

\begin{prop}
	For every $n \in \nn \cup \{\infty\}$, there exists a bi-atomic algebraic semiring $S$ such that $|\mathcal{A}_+(S)| = n$.
\end{prop}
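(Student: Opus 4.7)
My plan is to exhibit a concrete bi-atomic algebraic semiring for each value of $n$, treating the cases $n = 1$, $n \in \nn_{\ge 2}$, and $n = \infty$ separately.

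The case $n = 1$ is immediate: the semiring $S = \nn_0$ is bi-atomic with $\mathcal{A}_+(\nn_0) = \{1\}$. For finite $n \ge 2$, I would set $\alpha := \sqrt[n]{2}$ and $S := \nn_0[\alpha]$. The minimal polynomial of $\alpha$ over $\qq$ is $x^n - 2$, irreducible by Eisenstein's criterion at $p = 2$, so $\deg \alpha = n$. Since $\alpha > 1$ is algebraic, Proposition~\ref{prop:cyclic atomicity sufficient conditions}(2) ensures that $S$ is bi-atomic, while Proposition~\ref{prop:additive atoms of CPS}(2) forces $\{\alpha^k : 0 \le k \le n-1\} \subseteq \mathcal{A}_+(S)$. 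On the other hand, $\alpha^n = 2 \in \langle 1 \rangle$ gives $n(\alpha) \le n$, so Proposition~\ref{prop:additive atoms of CPS}(1) yields the equality $\mathcal{A}_+(S) = \{\alpha^k : 0 \le k \le n - 1\}$, which has cardinality exactly $n$ (as $\alpha > 1$ makes these powers distinct).

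The case $n = \infty$ requires a non-cyclic construction: take $S$ to be the subsemiring of $\rr_{\ge 0}$ generated by $\{\sqrt{p} : p \in \pp\}$ together with $\nn_0$, which is manifestly an algebraic positive semiring. By the classical $\qq$-linear independence in $\rr$ of the square-free products $\prod_{p \in F} \sqrt{p}$ as $F$ ranges over finite subsets of $\pp$, each element of $S$ has a unique expansion $\sum_F c_F \prod_{p \in F} \sqrt{p}$ with $c_F \in \nn_0$ of finite support. Hence $(S, +)$ is freely generated as an abelian monoid by this countable set of monomials, giving infinitely many additive atoms. For multiplicative atomicity, I would note that every monomial with nonempty $F$ has value at least $\sqrt{2}$, which implies that each nonzero element of $S$ is either equal to $1$ or exceeds $\sqrt{2}$. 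In particular $\uu(S) = \{1\}$, and any factorization $x = a_1 \cdots a_k$ in $S^\bullet$ with each $a_i \ne 1$ satisfies $(\sqrt{2})^k \le x$, hence $k \le \log_{\sqrt{2}} x$. A factorization of maximum length must then be atomic, so $(S^\bullet, \cdot)$ is atomic and $S$ is bi-atomic.

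The main obstacle is the multiplicative atomicity in the $n = \infty$ case, where one cannot simply quote Proposition~\ref{prop:cyclic atomicity sufficient conditions} because $S$ is not cyclic. The key input is the uniform lower bound of $\sqrt{2}$ on non-identity elements, which depends on all generators $\sqrt{p}$ being greater than $1$; this bound simultaneously pins down $\uu(S) = \{1\}$ and forces factorization lengths to be finite. The additive analysis, by contrast, is essentially free and requires only the well-known independence of radicals over $\qq$.
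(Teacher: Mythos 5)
Your proposal is correct. For finite $n$ it is essentially the paper's own argument: the paper takes $\alpha$ a root of $x^n - p$ for a prime $p$ (you take $p=2$), gets bi-atomicity from Proposition~\ref{prop:cyclic atomicity sufficient conditions}(2), the inclusion $\{\alpha^k \mid k \in \ldb 0, n-1\rdb\} \subseteq \mathcal{A}_+(\nn_0[\alpha])$ from Proposition~\ref{prop:additive atoms of CPS}(2), and the reverse containment from Proposition~\ref{prop:additive atoms of CPS}(1) because $\alpha^n$ is a sum of copies of $1$; your $n(\alpha) \le n$ bookkeeping is the same step.

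Where you genuinely diverge is the case $n = \infty$. The paper stays inside the cyclic machinery: it takes $q \in \qq_{>1}\setminus\nn$, quotes Proposition~\ref{prop:cyclic atomic characterization}(2) for bi-atomicity of $\nn_0[q]$, and reads off $\mathcal{A}_+(\nn_0[q]) = \{q^k \mid k \in \nn_0\}$ from Example~\ref{ex:multiplicatively cyclic monoids atomicity}, so no new input is needed. You instead build a non-cyclic semiring generated by $\{\sqrt{p} \mid p \in \pp\}$, which requires importing the classical (true, but external to the paper) fact that the square-free radical products are linearly independent over $\qq$ in order to see that $(S,+)$ is free with infinitely many atoms; your multiplicative argument (all non-identity elements of $S^\bullet$ are at least $\sqrt{2}$, so factorization lengths are bounded and a maximal-length factorization is atomic) is sound, and is in fact an instance of Theorem~\ref{thm:BF sufficient condition}, since $1$ is not a limit point of $S^\bullet\setminus\{1\}$, so you could have cited that theorem and concluded $S$ is even a bi-BFS. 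The trade-off: the paper's route is shorter and reuses results already established for cyclic semirings, while yours produces a structurally different (non-cyclic, irrational) example with stronger factorization properties, at the cost of an outside independence theorem. One cosmetic slip: elements other than $1$ do not ``exceed'' $\sqrt{2}$ but are at least $\sqrt{2}$ (the monomial $\sqrt{2}$ itself attains the bound); your inequality $(\sqrt{2})^k \le x$ is the one actually used, so nothing breaks.
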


\begin{proof}
	For $n = \infty$, take $q \in \qq_{> 1} \setminus \nn$ and consider the rational semiring $\nn_0[q]$. It follows from Proposition~\ref{prop:cyclic atomic characterization} that $\nn_0[q]$ is a bi-atomic semiring. In addition, we have seen in Example~\ref{ex:multiplicatively cyclic monoids atomicity} that $\mathcal{A}_+(\nn_0[q]) = \{q^n \mid n \in \nn_0\}$. Thus, $|\mathcal{A}_+(\nn_0[q])| = \infty$.
	
	Now suppose that $n \in \nn$. Consider the polynomial $m(x) = x^n - p$, where $n \in \nn$ and $p \in \pp$. Because $m(1) < 0$, the polynomial $m(x)$ has a root $\alpha \in \rr_{>1}$. It follows from part~(2) of Proposition~\ref{prop:cyclic atomic characterization} that the positive semiring $\nn_0[\alpha]$ is bi-atomic. We claim that $|\mathcal{A}_+(\nn_0[\alpha])| = n$. By Eisenstein's Criterion, the polynomial $m(x)$ is irreducible in $\qq[X]$, and so it is the minimal polynomial of $\alpha$. Therefore $\{\alpha^j \mid j \in \ldb 0, n - 1 \rdb \}$ is contained in $\mathcal{A}_+(\nn_0[\alpha])$ by Proposition~\ref{prop:additive atoms of CPS}. On the other hand, $\alpha^n \notin \mathcal{A}_+(\nn_0[\alpha])$ because it is the sum of $p$ copies of $1$. Then it follows from part~(1) of Proposition~\ref{prop:additive atoms of CPS} that $\mathcal{A}_+(\nn_0[\alpha]) = \{\alpha^j \mid j \in \ldb 0, n - 1 \rdb \}$, whence $|\mathcal{A}_+(\nn_0[\alpha])| = n$.
\end{proof} 

\smallskip

Though we are primarily interested in bi-atomic positive semirings, we note that there are positive semirings whose additive monoids (resp., multiplicative monoids) are antimatter even though their multiplicative monoids (resp., additive monoids) are atomic. We conclude this section with two examples illustrating this fact.

\smallskip

\begin{example} \label{ex:a positive semiring with additve monoid antimatter and multiplicative monoid UFM}
	Consider the positive semiring $\nn_0[1/d]$, where $d \in \nn_{\ge 2}$. Clearly, $1 \notin \mathcal{A}_+(\nn_0[1/d])$ and so it follows from Corollary~\ref{cor:atomic characterization of cyclic algebraic semirings} that $(\nn_0[1/d],+)$ is antimatter. We now prove that $(\nn_0[1/d]^\bullet, \cdot)$ is atomic. Let $D$ be the set consisting of all prime divisors of $d$, and consider the multiplicative submonoid $M_D$ of $\nn$ generated by $D$. Since  $\nn_0[1/d]^\bullet = \{ n/d^m \mid m,n \in \nn_0 \}$, the group of units of $(\nn_0[1/d]^\bullet, \cdot)$ is $\gp(M_D)$. Let $M_P$ be the multiplicative submonoid of $\nn$ generated by $\pp \setminus D$, and define $\varphi \colon (\nn_0[1/d]^\bullet, \cdot) \to M_P$ by $n/d^m = n'$, where for $n \in \nn$ and $m \in \nn_0$, the element $n'$ is the maximal divisor of $n$ in $M_P$. It is clear that $\varphi$ is a surjective monoid homomorphism, and it is easy to check that $\varphi(x) = \varphi(y)$ if and only if $x/y \in \gp(M_D)$. It follows from the First Isomorphism Theorem (for monoids) that the reduced monoid of $(\nn_0[1/d]^\bullet, \cdot)$ is isomorphic to $M_P$. Since $M_P$ is a free (commutative) monoid, $(\nn_0[1/d]^\bullet, \cdot)$ must be a UFM and, therefore, an atomic monoid. In particular, the positive semiring $\nn_0[1/d]$ is not bi-atomic.
\end{example}

\smallskip

\begin{example} \label{ex:a positive semiring with additive monoid BFM and and multiplicative monoid antimatter}
	Consider the positive semiring $S = \{0\} \cup \qq_{\ge 1}$. One can easily check that $(S,+)$ is an atomic monoid and that $\mathcal{A}_+(S) = \qq \cap [1,2)$ (we shall discuss the atomic structure of a more general case in Theorem~\ref{thm:BF sufficient condition}). Let us verify that $\mathcal{A}_\times(S)$ is the empty set. First, note that $S$ is bi-reduced. For every $q \in \qq_{> 1}$, take $n \in \nn$ sufficiently large such that $q \cdot \frac{n}{n+1} > 1$, and so $q \cdot \frac{n}{n+1} \in S^\bullet$. This, along with the fact that $q = \big(q \frac{n}{n+1}\big) \big(\frac{n+1}{n}\big)$, ensures that $q \notin \mathcal{A}_\times(S)$. Hence $(S^\bullet,\cdot)$ is antimatter. In particular, the positive semiring $S$ is not bi-atomic.
\end{example}

\bigskip
%%%%%%%%%%%%%%%%%%%%%%%%%%%%%%%
%%%%%%%%%%%%%%%%%%%%%%%%%%%%%%%
\section{The Ascending Chain Condition on Principal Ideals}
\label{sec:ACCP}

\smallskip

In this section, we construct a class of positive semirings satisfying the bi-ACCP, and we identify a class of bi-atomic cyclic positive semirings that do not satisfy the bi-ACCP. We construct the former class in the next proposition, which is motivated by~\cite{BG20}. For such a construction, the use of Lindemann-Weierstrass Theorem (Theorem \ref{t:L-W}) is crucial.

\smallskip

\begin{prop} \label{prop:class of positive semirings with thebi-ACCP}
	Let $P$ be a nonempty subset of $\pp$, and let $M = \langle 1/p \mid p \in P \rangle$. Then the positive semiring $E(M)$ satisfies the bi-ACCP. %\subseteq \mathbb P$ of prime numbers, the (\textcolor{red}{TODO:} semiring generated by the) additive monoid $\langle e^{1/p} \mid p \in P \rangle$ is a positive semiring satisfying the bi-ACCP. 
\end{prop}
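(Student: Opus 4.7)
The proof splits cleanly into additive and multiplicative ACCP. For the additive part, the Lindemann--Weierstrass Theorem (Theorem~\ref{t:L-W}) gives that $(E(M),+)$ is the free commutative monoid on $\{e^m : m \in M\}$; free commutative monoids are UFMs and hence satisfy the ACCP.

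For multiplicative ACCP, I would take an ascending chain of principal ideals $(a_1) \subseteq (a_2) \subseteq \cdots$ in $(E(M)^\bullet, \cdot)$, equivalently $a_i = a_{i+1} b_i$ with $b_i \in E(M)^\bullet$. Using the freeness of $(E(M),+)$, each element is written uniquely as $a = \sum_m c_m e^m$ with $c_m \in \nn_0$ of finite support; set $\sigma(a) = \sum_m c_m$ and $\omega(a) = |\{m : c_m > 0\}|$. Since $M \subseteq \qq$ is totally ordered, $\sigma(xy) = \sigma(x)\sigma(y)$, and the sumset bound $|X + Y| \geq |X| + |Y| - 1$ in a totally ordered abelian group yields $\omega(xy) \geq \omega(x) + \omega(y) - 1$. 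Thus $\sigma(a_i)$ and $\omega(a_i)$ are non-increasing sequences of positive integers that must stabilize; past the common stabilization index $i_0$, $b_i$ satisfies $\sigma(b_i) = \omega(b_i) = 1$, so $b_i = e^{n_i}$ for a unique $n_i \in M$. Multiplication by $b_i$ then shifts the whole support of $a_{i+1}$ up by $n_i$ to produce $a_i$; writing $\mu(a_i) = \min \supp(a_i)$, this yields $\mu(a_i) - \mu(a_{i+1}) = n_i \in M$, producing a descending chain $\mu(a_{i_0}) \geq \mu(a_{i_0+1}) \geq \cdots$ in $M$ whose consecutive differences lie in $M$.

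The main obstacle is then the auxiliary lemma that $(M,+) = \langle 1/p : p \in P \rangle$ itself satisfies the ACCP. I would argue it by splitting on how many primes appear in the chain. If only finitely many primes $F \subseteq P$ occur in (minimal) representations of the successive differences $m_i - m_{i+1}$, then every term lies in the finitely generated Puiseux submonoid $\langle 1/p : p \in F \rangle$, which is a BFM and hence satisfies the ACCP. Otherwise infinitely many primes appear, and here the budget $\sum_i (m_i - m_{i+1}) \leq m_1$ forces each prime $p \in P$ to occur in only finitely many differences, while Frobenius-type obstructions---starting from $1/(pq) \notin M$ for distinct $p,q \in P$ (because $pa + qb = 1$ has no nonnegative integer solutions) and their higher-order analogues---rule out all but finitely many descent steps once enough distinct primes have entered the denominators of the $m_i$. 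Granted this lemma, the chain $(\mu(a_i))$ stabilizes, so $n_i = 0$ eventually and $a_i = a_{i+1}$ for large $i$, completing the multiplicative ACCP.
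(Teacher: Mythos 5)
Your treatment of $E(M)$ itself is sound and runs parallel to the paper's: the additive half is the same (freeness via Theorem~\ref{t:L-W}), and for the multiplicative half your coefficient-sum $\sigma$ (the $\omega$ bound is not even needed, since $\sigma(b_i)=1$ already forces $b_i=e^{n_i}$) plays the role that the paper's analytic argument plays (the paper lets $x_n$ decrease to a limit $\ge 1$, so that $y_n\to 1$ and eventually $y_n\in S\cap(0,2)\subseteq e(M)$); both routes then transfer the chain to an ascending chain of principal ideals of $(M,+)$ — the paper tracks all exponents of $x_1$, you track only $\mu(a_i)=\min\supp(a_i)$, which suffices. The difference is what happens next: the paper does not prove that $M=\langle 1/p\mid p\in P\rangle$ satisfies the ACCP; it cites \cite[Example~2.1]{AAZ90} for this, whereas you attempt a proof, and that is where the genuine gap lies.

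Your finitely-many-primes case is fixable (the nonzero differences lie in a finitely generated Puiseux monoid, hence are bounded away from $0$, and their sum is at most $m_1$, so only finitely many are nonzero; note the elements $m_i$ themselves need not lie in $\langle 1/p\mid p\in F\rangle$, only the differences do). But in the infinitely-many-primes case the appeal to ``Frobenius-type obstructions and their higher-order analogues'' is not an argument: nothing you state excludes an infinite strictly descending chain whose steps are single generators $1/p_i$ with $p_i$ growing, and the budget bound $\sum_i(m_i-m_{i+1})\le m_1$ alone can never exclude it, since exactly such shrinking steps do violate the ACCP in monoids like $\langle 1/2^n\mid n\in\nn\rangle$; the distinctness of the prime denominators must enter in an essential way. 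A concrete way to close the gap: every $q\in M$ has a unique canonical form $q=a+\sum_{p\in P}c_p/p$ with $a\in\nn_0$, $0\le c_p<p$, and almost all $c_p=0$ (uniqueness because the denominators are distinct primes). Subtracting a generator $1/p$ inside $M$ either keeps $a$ and lowers $c_p$ by $1$, or lowers $a$ by $1$ (raising $c_p$ to $p-1$); hence along a divisibility chain the integer part $a$ is non-increasing and eventually constant, after which every nontrivial step strictly decreases the nonnegative integer $\sum_p c_p$, so only finitely many steps are nontrivial and $M$ satisfies the ACCP. With that lemma (or simply the citation to \cite[Example~2.1]{AAZ90}, as the paper does), your argument is complete.
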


\begin{proof}
	Set $S = E(M)$. It can be deduced from \cite[Example~2.1]{AAZ90} that the monoid $M$ satisfies the ACCP. On the other hand, it follows from Theorem \ref{t:L-W} that $(S,+)$ is a free (commutative) monoid on the set $\{e^q \mid q \in M\}$. The fact that $S \cap (0,1)=\emptyset$ ensures that $S$ is a bi-reduced semiring. Since $(S,+)$ is a free (commutative) monoid, it must satisfy the ACCP. %and is, therefore, atomic.
	
	We proceed to argue that $(S^\bullet, \cdot)$ also satisfies the ACCP. To do so, let $(x_n S^\bullet)_{n \in \nn}$ be an increasing sequence of principal ideals of $(S^\bullet, \cdot)$. For each $n \in \nn$, take $y_{n+1} \in S^\bullet$ such that $x_{n+1}y_{n+1} = x_n$. Let $\ell$ be the limit of the decreasing sequence $(x_n)_{n \in \nn}$. From $\inf S^\bullet = 1$, we see that $\ell \ge 1$ and, as a result, $\lim y_n = 1$. We may therefore assume that $y_n < 2$ for every $n \in \nn_{\ge 2}$. Clearly, $S \cap (0,2)$ is a subset of $e(M) = \{e^q \mid q \in M\}$. For each $n \in \nn_{\ge 2}$, write $y_n = e^{q_n}$ for some $q_n \in M$. Take $c_1, \dots, c_k \in \nn$ and $r_1, \dots, r_k \in M$ with $r_1 < \dots < r_k$ such that $x_1 = \sum_{i=1}^k c_i e^{r_i}$. Since $(S,+)$ is a free (commutative) monoid on the set $\{e^{q} \mid q \in M\}$ and $(y_2 \cdots y_n) x_n = x_1$ for every $n \in \nn_{\ge 2}$, one finds that $x_n = \sum_{i=1}^k c_i e^{r_{n,i}}$ for $r_{n,1}, \dots, r_{n,k} \in M$ such that $r_j = r_{n,j} + \sum_{i=2}^n q_i$ for each $j \in \ldb 1, k \rdb$. As for each $j \in \ldb 1,k \rdb$, the equality $r_{n,j} = r_{n+1,j} + q_{n+1}$ holds for every $n \in \nn$, the sequence $(r_{n,j} + M)_{n \in \nn}$ is an ascending chain of principal ideals of~$M$. Since $M$ satisfies the ACCP, $(r_{n,j} + M)_{n \in \nn}$ must stabilize. Thus, $(x_n S^\bullet)_{n \in \nn_0}$ also stabilizes. Consequently, the monoid $(S^\bullet, \cdot)$ satisfies the ACCP, and so the positive semiring $S$ satisfies the bi-ACCP.
\end{proof}

\smallskip

Our next goal is to identify a class of bi-atomic cyclic rational semirings whose members do not satisfy the bi-ACCP. We first introduce some notation and a lemma.

\smallskip

Let $f(x) = \sum_{i=0}^n c_ix^{m_i}$ be a nonzero polynomial in $\qq[x]$. We say that $f(x)$ is \emph{canonically represented} by $\sum_{i=0}^n c_ix^{m_i}$ if $c_i \neq 0$ for every $i \in \ldb 0,n \rdb$ and $m_i \neq m_j$ for any distinct $i,j \in \ldb 0, n \rdb$; in this case, we call $\sum_{i=0}^n c_ix^{m_i}$ the \emph{canonical representation} of $f(x)$ (which is clearly unique up to commutation of its monomials). The \emph{support} $\supp \, f(x)$ of $f(x)$ is the set of exponents of the monomials appearing in its canonical representation. Let $\alpha$ be an algebraic number, and let $m(x)$ be the minimal polynomial of $\alpha$. Observe that there exists a unique $\ell \in \nn$ such that $\ell m(x) \in \zz[x]$ has content~$1$, while there are unique polynomials $m^+(x)$, $m^-(x) \in \nn_0[x]$ such that  $\ell m(x) = m^+(x) - m^-(x)$ and $\supp \, m^+(x) \bigcap \supp \, m^-(x) = \emptyset$. We call the pair $(m^+(x), m^-(x))$ the \emph{minimal pair} of $\alpha$.

\smallskip

The following lemma is a slight generalization of \cite[Theorem~4.7]{CG20}, and the proof we provide here follows the same idea.

\smallskip

\begin{lemma} \label{lem:ACCP necessary condition for additive monoids of PCS}
	Let $\alpha \in (0,1)$ be an algebraic number with minimal polynomial $m(x)$. If $(\nn_0[\alpha],+)$ satisfies the ACCP, then $m^+(x) \notin \nn_0[x] + m^-(x) \nn_0[x]^\bullet$.
\end{lemma}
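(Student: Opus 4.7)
My plan is to argue the contrapositive: assume that $m^+(x) = f(x) + m^-(x) g(x)$ for some $f \in \nn_0[x]$ and some $g \in \nn_0[x]^\bullet$, and derive a failure of the ACCP in $(\nn_0[\alpha], +)$. The first step is to record basic positivity: since all polynomials in sight have nonnegative coefficients and $\alpha \in (0,1)$ is positive, both $g(\alpha)$ and $m^-(\alpha)$ are strictly positive (note $m^- \ne 0$, since otherwise $m$ would have all nonnegative coefficients and the positive number $\alpha$ could not be a root). Substituting $m^+(\alpha) = m^-(\alpha)$, which follows from $m(\alpha) = 0$, into the identity yields the central relation $m^-(\alpha)(1 - g(\alpha)) = f(\alpha) \geq 0$, and in particular $g(\alpha) \leq 1$.

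The next step is to split on whether $f$ vanishes. If $f \ne 0$, I would multiply the polynomial identity by $g(x)^n$ and evaluate at $\alpha$, producing $a_n - a_{n+1} = f(\alpha) g(\alpha)^n$, where $a_n := m^-(\alpha) g(\alpha)^n$ lies in $\nn_0[\alpha]^\bullet$. Since $f(\alpha) > 0$ and $g(\alpha) > 0$, each difference is a strictly positive element of $\nn_0[\alpha]$, whence $(a_n + \nn_0[\alpha])_{n \geq 0}$ is a strictly ascending chain of principal ideals of $(\nn_0[\alpha], +)$, contradicting the ACCP.

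The main obstacle is the degenerate case $f = 0$, in which the sequence above collapses to a constant chain. Here the identity reduces to $m^+(x) = m^-(x) g(x)$, and so $g(\alpha) = 1$. To handle this case I would examine the constant term of $g$: nonnegativity of the coefficients of $g$ together with $\alpha > 0$ forces that, if $g(0) \geq 1$, then $g$ must be the constant $1$, which would give $m^+ = m^-$ and contradict the disjointness of their supports (as $m \ne 0$). So $g(0) = 0$ and $g(x) = x h(x)$ for some $h \in \nn_0[x]$, and evaluating yields $\alpha^{-1} = h(\alpha) \in \nn_0[\alpha]$. Thus $\alpha$ is a nontrivial multiplicative unit of $\nn_0[\alpha]$, so the semiring is not bi-reduced; Proposition~\ref{prop:bi-reducedness characterization of PCS} then forces $(\nn_0[\alpha], +)$ to be non-atomic, which is incompatible with the ACCP.
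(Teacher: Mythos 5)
Your proposal is correct. The engine is the same as in the paper: from $m^+(\alpha)=m^-(\alpha)$ and the decomposition $m^+=f+m^-g$ one manufactures an ascending chain of principal ideals of $(\nn_0[\alpha],+)$ whose minima decrease strictly, hence never stabilizes. The difference is in the choice of the contracting factor. You multiply by $g(\alpha)^n$, which works cleanly when $f\neq 0$ (each step drops by $f(\alpha)g(\alpha)^n>0$), but degenerates when $f=0$ since then $g(\alpha)=1$ and your chain is constant; you repair this by showing $g(0)=0$, hence $\alpha^{-1}=h(\alpha)\in\nn_0[\alpha]$, so $\alpha\neq 1$ is a multiplicative unit, and then invoking Proposition~\ref{prop:bi-reducedness characterization of PCS} together with ACCP $\Rightarrow$ atomicity to rule out the ACCP. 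That detour is valid (the cited proposition precedes the lemma, so there is no circularity), and it even yields the stronger conclusion that $(\nn_0[\alpha],+)$ is not atomic in that case. The paper instead multiplies by $\alpha^{sn}$ for a single exponent $s$ in the support of $b(x)=g(x)$, writing $m^-(\alpha)\alpha^{sn}=c(\alpha)\alpha^{sn}+m^-(\alpha)\alpha^{s(n+1)}$ with $c(x)=a(x)+(b(x)-x^s)m^-(x)\in\nn_0[x]$; since the minima $m^-(\alpha)\alpha^{sn}$ strictly decrease once $s\ge 1$ (and $s\ge 1$ can always be chosen, because $g$ constant is impossible by the same disjoint-support argument you use to exclude $g=1$), this single chain covers both of your cases uniformly, with no appeal to bi-reducedness. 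So your argument buys a sharper conclusion in the degenerate case at the cost of a case split and an extra imported proposition, while the paper's monomial trick buys uniformity.
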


\begin{proof}
	Suppose that the monoid $(\nn_0[\alpha],+)$ satisfies the ACCP. Assume, towards a contradiction, that $m^+(x) \in \nn_0[x] + m^-(x) \nn_0[x]^\bullet$, and then take polynomials $a(x) \in \nn_0[x]$ and $b(x) \in \nn_0[x]^\bullet$ satisfying $m^+(x) = a(x) + b(x)m^-(x)$. Let $s$ be a nonnegative integer in the support of $b(x)$, and observe that $c(x) := a(x) + (b(x) - x^s)m^-(x)$ belongs to $\nn_0[x]$. For each $n \in \nn$, let $I_n$ denote the principal ideal $m^-(\alpha) \alpha^{sn} + \nn_0[\alpha]$ of $(\nn_0[\alpha],+)$. Since the equality $m^-(\alpha) \alpha^{sn} = c(\alpha)\alpha^{sn} + m^-(\alpha)\alpha^{s(n+1)}$ holds for every $n \in \nn$, the sequence $(I_n)_{n \in \nn}$ is an ascending chain of principal ideals of $(\nn_0[\alpha],+)$. In addition, $(I_n)_{n \in \nn}$ does not stabilize since the sequence $(\min I_n)_{n \in \nn} = (m^-(\alpha) \alpha^{sn})_{n \in \nn}$ is strictly decreasing. However, this contradicts that $(\nn_0[\alpha],+)$ satisfies the ACCP.
\end{proof}

\smallskip

We are now in a position to construct a bi-atomic semiring that does not satisfy the bi-ACCP.

\smallskip

\begin{prop} \label{prop:infinitely many bi-atomic PCS without the ACCP}
	For every $q \in \qq \cap (0,1)$ such that $\mathsf{n}(q) > 1$ and $\mathsf{d}(q) \in \pp$, the positive semiring $\nn_0[q]$ is bi-atomic but does not satisfy the bi-ACCP.
\end{prop}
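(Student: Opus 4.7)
My plan is to split the argument into bi-atomicity and failure of bi-ACCP. Set $n = \mathsf{n}(q)$ and $d = \mathsf{d}(q)$, so $\gcd(n,d) = 1$, $d \in \pp$, $n > 1$, and $n < d$ (because $q < 1$). The failure of bi-ACCP is essentially immediate from Lemma~\ref{lem:ACCP necessary condition for additive monoids of PCS} applied to the monic minimal polynomial $m(x) = x - n/d$ of $q$: the scaled polynomial $d \cdot m(x) = dx - n$ has content $1$, giving minimal pair $(m^+(x), m^-(x)) = (dx, n)$, and the identity $dx = (d-n) x + n \cdot x$, valid in $\nn_0[x]$ because $d > n$, displays $m^+(x) \in \nn_0[x] + m^-(x) \nn_0[x]^\bullet$. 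The contrapositive of the lemma then forces $(\nn_0[q], +)$ to fail the ACCP, and hence $\nn_0[q]$ fails the bi-ACCP.

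For bi-atomicity, the additive monoid is atomic by Example~\ref{ex:multiplicatively cyclic monoids atomicity}. The real content is the atomicity of $(\nn_0[q]^\bullet, \cdot)$, which I plan to obtain by proving the stronger statement that this monoid satisfies the ACCP. Since $d$ is prime, every $s \in \nn_0[q]^\bullet$ has a unique representation $s = a_s \cdot d^{v_s}$ with $a_s \in \nn$ coprime to $d$ and $v_s \in \zz$, and the assignment $s \mapsto (a_s, v_s)$ is a monoid homomorphism into $(\nn, \cdot) \times (\zz, +)$. The core technical step is the following finiteness lemma: for every positive integer $a$ coprime to $d$, writing $a = n^e b$ with $\gcd(b, n) = 1$, any $k \in \nn_0$ with $a/d^k \in \nn_0[q]$ satisfies $k \leq e + \log_d b$. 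In particular, for each fixed such $a$, only finitely many $k$ yield an element of $\nn_0[q]$.

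I would prove the lemma by induction on $k$. If $a \geq d^k$, the bound follows from $n < d$, since then $\log_d a < e + \log_d b$. Otherwise $a/d^k \in \nn_0[q] \cap (0,1)$, so any $q$-expansion $a/d^k = \sum_{i \geq 0} c_i q^i$ must have $c_0 = 0$; hence $a/d^k = q \cdot s'$ for some $s' \in \nn_0[q]$, and in $\qq$ one reads off $s' = a/(n d^{k-1})$. Because every element of $\nn_0[q]$ has denominator dividing a power of $d$, and $\gcd(a, d) = \gcd(n, d) = 1$, comparing lowest-terms denominators forces $n \mid a$. Writing $a = n a'$ with $a' = n^{e-1} b$ (so $e \geq 1$) and applying the inductive hypothesis to $s' = a'/d^{k-1}$ yields $k - 1 \leq (e-1) + \log_d b$, closing the step. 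The denominator comparison that produces $n \mid a$, which is a parametrized refinement of $1/n \notin \nn_0[q]$, is the delicate point that I expect to be the main obstacle of the proof.

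With the lemma in hand, the multiplicative ACCP is short. In an ascending chain $(s_i \nn_0[q]^\bullet)_{i \in \nn}$ we have $s_{i+1} \mid s_i$, so the integers $a_{s_i}$ form a divisibility-decreasing sequence in $\nn$ and stabilize at some $a^*$. From that index onward, each quotient $s_i/s_{i+1}$ has $a$-part $1$, so it equals $d^v$ with $v \geq 0$, the case $v \leq -1$ being excluded by the $a = 1$ instance of the lemma. If the chain were strictly ascending forever, $v_{s_i}$ would tend to $-\infty$, contradicting the lemma applied to $a^*$. Hence the chain stabilizes, $(\nn_0[q]^\bullet, \cdot)$ satisfies the ACCP, and $\nn_0[q]$ is bi-atomic.
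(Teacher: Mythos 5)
Your proposal is correct, and its skeleton matches the paper's for two of the three ingredients: additive atomicity is quoted from Example~\ref{ex:multiplicatively cyclic monoids atomicity}, and the failure of the additive ACCP is obtained, exactly as in the paper, from Lemma~\ref{lem:ACCP necessary condition for additive monoids of PCS} via the minimal pair $(\mathsf{d}(q)x, \mathsf{n}(q))$ and the identity $dx = (d-n)x + n\cdot x$. Where you genuinely diverge is the multiplicative ACCP. The paper disposes of it in one line, asserting that $r_{n+1} \mid r_n$ in $(\nn_0[q]^\bullet,\cdot)$ forces $\mathsf{n}(r_{n+1}) \le \mathsf{n}(r_n)$ and $\mathsf{d}(r_{n+1}) \le \mathsf{d}(r_n)$, so that both integer sequences stabilize. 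You instead decompose each element as $a_s d^{v_s}$ with $\gcd(a_s,d)=1$ and $v_s \in \zz$, prove the finiteness lemma bounding the exponents $k$ with $a/d^k \in \nn_0[q]$ by $e + \log_d b$ (your induction is sound: $c_0 = 0$ for elements below $1$, and the denominator comparison does force $n \mid a$ because $\gcd(n,d)=1$, so the point you flagged as delicate goes through), and then run the chain argument on the prime-to-$d$ parts and the $d$-exponents. Your route is longer, but it is the more robust one: the paper's monotonicity claim is not literally true as stated, since numerators and denominators need not be monotone under divisibility once integer multiples of $d$ intervene --- for $q = 2/3$ one has $3 \mid 2$ in $(\nn_0[2/3]^\bullet,\cdot)$ because $2 = 3\cdot \frac{2}{3}$, so the divisor's numerator can exceed the dividend's, and likewise $\frac{2}{3} \mid 2$ shows the divisor's denominator can exceed the dividend's. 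Your splitting isolates the part that really is divisibility-monotone (the prime-to-$d$ part) and uses the lemma to control the $d$-exponent (bounded below once the prime-to-$d$ part stabilizes, and never raised by a quotient, by the $a=1$ case), which is precisely what is needed to make this step of the paper's argument airtight.
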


\begin{proof}
	From Example~\ref{ex:multiplicatively cyclic monoids atomicity} we know that $(\nn_0[q],+)$ is an atomic monoid. We now show that $(\nn_0[q]^\bullet, \cdot)$ satisfies the ACCP. Suppose that $(r_n \nn_0[q]^\bullet)_{n \in \nn}$ is an ascending chain of principal ideals of $(\nn_0[q]^\bullet, \cdot)$. For every $n \in \nn$, the fact that $r_{n+1}$ divides $r_n$ in $(\nn_0[q]^\bullet, \cdot)$ ensures that $\mathsf{n}(r_{n+1}) \le \mathsf{n}(r_n)$ and $\mathsf{d}(r_{n+1}) \le \mathsf{d}(r_n)$. Consequently, the ascending chain of principal ideals $(r_n \nn_0[q]^\bullet)_{n \in \nn}$ must stabilize. As a result, $(\nn_0[q]^\bullet, \cdot)$ satisfies the ACCP and is therefore atomic. Thus, the rational semiring $\nn_0[q]$ is bi-atomic. Arguing that $\nn_0[q]$ does not satisfy the bi-ACCP amounts to verifying that the monoid $(\nn_0[q],+)$ does not satisfy the ACCP. This is an immediate consequence of Lemma~\ref{lem:ACCP necessary condition for additive monoids of PCS} as the minimal pair of~$q$ is $(\mathsf{d}(q)x, \mathsf{n}(q))$ and $\mathsf{d}(q) > \mathsf{n}(q)$.
\end{proof}

\smallskip

Although positive semirings satisfying the bi-ACCP are clearly bi-atomic, Proposition~\ref{prop:infinitely many bi-atomic PCS without the ACCP} shows that the reverse implication does not hold in general. We emphasize this observation with the following diagram.

\smallskip

\begin{equation} \label{diag:atomic chain unti bi-ACCP}
	\begin{tikzcd}%[cramped]
		\textbf{ bi-ACCP semiring}  \arrow[r, Rightarrow] \arrow[red, r, Leftarrow, "/"{anchor=center,sloped}, shift left=1.7ex]  & \textbf{ bi-atomic semiring}
	\end{tikzcd}
\end{equation}

\bigskip
%%%%%%%%%%%%%%%%%%%%%%%
%%%%%%%%%%%%%%%%%%%%%%%
\section{The Bounded Factorization Property}
\label{sec:BFP}

\smallskip

In this section, we identify a class of positive semirings that are bi-BFSs. In addition, we construct a class of bi-ACCP positive semirings that are not bi-BFSs. It was first proved in \cite[Proposition~4.5]{fG19} that a positive monoid $M$ is a BFM provided that $0$ is not a limit point of $M^\bullet$. We proceed to establish a similar sufficient condition for a positive semiring to be a bi-BFS.

\smallskip

\begin{theorem} \label{thm:BF sufficient condition}
	Let $S$ be a positive semiring. Then $S$ is a bi-BFS provided that $1$ is not a limit point of $S^\bullet \setminus \{1\}$. In addition, for $r > 0$, the positive semiring $S_r$ generated by the ray $\rr_{\ge r}$ is a bi-BFS if and only if $r > 1$, in which case,
		\[
			\mathcal{A}_+(S_r) = \big( \{1\} \cup [r,r+1) \big) \setminus \{ \lceil r \rceil \} \quad \text{ and } \quad \mathcal{A}_\times(S_r) = \big( \pp_{< r^2} \cup [r,r^2) \big) \setminus \pp \cdot (S_r)_{> 1}.
		\]
\end{theorem}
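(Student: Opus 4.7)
For the first statement, my plan is to apply \cite[Proposition~4.5]{fG19}, which asserts that a positive monoid $M$ is a BFM whenever $0$ is not a limit point of $M^\bullet$, twice: once directly to $(S,+)$ and once (via logarithms) to $(S^\bullet,\cdot)$. For the second statement, I would first identify $S_r$ concretely in each of the regimes $r<1$, $r=1$, and $r>1$, then deduce bi-BFS from the first statement when $r>1$, produce antimatter witnesses when $r\le 1$, and close with a case analysis for the atom sets.

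\textbf{First statement.} Assume $1$ is not a limit point of $S^\bullet\setminus\{1\}$. The key step is to upgrade this to ``$0$ is not a limit point of $S^\bullet$'': given $s_n\in S^\bullet$ with $s_n\to 0$, setting $k_n:=\lceil 1/s_n\rceil$ one has $k_n s_n=s_n+\cdots+s_n\in [1,1+s_n)\cap S$, so either $k_n s_n>1$ infinitely often (yielding a sequence in $S^\bullet\setminus\{1\}$ converging to $1$) or $k_n s_n=1$ eventually (in which case $1+s_n\in S^\bullet\setminus\{1\}$ does the same), contradicting the hypothesis. Hence $(S,+)$ is a BFM by \cite[Prop.~4.5]{fG19}. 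Applying the same argument to $s^n$ for any $s\in S^\bullet\cap(0,1)$ forces $S^\bullet\subseteq \rr_{\ge 1}$, so $M:=\ln(S^\bullet)$ is a positive monoid isomorphic to $(S^\bullet,\cdot)$ in which the hypothesis becomes precisely ``$0$ is not a limit point of $M^\bullet$''; a second application of \cite[Prop.~4.5]{fG19} concludes.

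\textbf{Structure of $S_r$ and bi-BFS.} A short closure check gives $S_r=\rr_{\ge 0}$ for $r<1$, $S_r=\{0\}\cup \rr_{\ge 1}$ for $r=1$, and $S_r=\nn_0\cup \rr_{\ge r}$ for $r>1$. When $r\le 1$, every $x\in S_r^\bullet$ with $x>1$ admits the nontrivial multiplicative factorization $\sqrt{x}\cdot\sqrt{x}$ in $S_r$, so $(S_r^\bullet,\cdot)$ is antimatter and $S_r$ is not a bi-BFS. When $r>1$, $(1-\epsilon,1+\epsilon)\cap S_r=\{1\}$ for $\epsilon<\min(1,r-1)$, placing $S_r$ under the first statement and yielding bi-BFS.

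\textbf{Atom computations.} For $\mathcal{A}_+(S_r)$, that $1$ is the smallest positive element of $S_r$ puts $1\in\mathcal{A}_+(S_r)$, rules out $n\in\nn_{\ge 2}$ via $n=1+(n-1)$, and rules out $x\ge r+1$ via $x=1+(x-1)$ with $x-1\ge r$. For $x\in [r,r+1)\setminus\nn$, the constraint $a\le x/2<r$ on any nontrivial summand forces $a\in \nn\cap[1,r)$; then $b=x-a$ satisfies $b\notin\nn$ (as $x\notin\nn$) and $b<r$, so $b\notin S_r$. This gives $\mathcal{A}_+(S_r)=(\{1\}\cup[r,r+1))\setminus\{\lceil r\rceil\}$. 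The multiplicative case is the main obstacle. Since $(S_r^\bullet,\cdot)$ is reduced and any $x\ge r^2$ splits nontrivially as $\sqrt{x}\cdot\sqrt{x}$ with $\sqrt{x}\in\rr_{\ge r}\subseteq S_r$, atoms lie in $S_r^\bullet\cap(1,r^2)$; composite integers below $r$ split in $\nn$, so atoms lie in $\pp_{<r^2}\cup[r,r^2)$, and no atom lies in $\pp\cdot(S_r)_{>1}$ because such elements admit the nontrivial factorization $x=p\cdot s$. Conversely, for any $x\in(\pp_{<r^2}\cup[r,r^2))\setminus\pp\cdot(S_r)_{>1}$, a nontrivial factorization $x=ab$ would force $a\le\sqrt{x}<r$ and hence $a\in\nn_{\ge 2}$; then a prime divisor $p\mid a$ produces $x=p\cdot(x/p)$ with $x/p=(a/p)b\in S_r$ and $x/p>1$, contradicting $x\notin\pp\cdot(S_r)_{>1}$. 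Combining the two directions yields the stated formula.
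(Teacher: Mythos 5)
Your overall route coincides with the paper's: for the first statement you reduce, just as the paper does, to \cite[Proposition~4.5]{fG19} applied to $(S,+)$ and then to $(\ln S^\bullet,+)$ after showing $S\cap(0,1)=\emptyset$ (your ceiling argument with $\lceil 1/s_n\rceil$ versus the paper's sequence $(1+r^n)_{n\in\nn}$ for $r\in S\cap[0,1)$ is an immaterial variation), and your identification $S_r=\nn_0\cup\rr_{\ge r}$ for $r>1$, the deduction of bi-BFS from the first statement, and both atom computations match the paper's proof in substance and in detail.

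The one genuine flaw is the case $r<1$ of the ``only if'' direction. There $S_r=\rr_{\ge 0}$, so $(S_r^\bullet,\cdot)=(\rr_{>0},\cdot)$ is a group: every element is a multiplicative unit, the splitting $x=\sqrt{x}\cdot\sqrt{x}$ is a product of units and hence not a nontrivial factorization in the monoid-theoretic sense, and although the monoid is indeed antimatter, it is vacuously atomic and vacuously a BFM because it has no non-units whatsoever. Consequently, ``antimatter, hence not a bi-BFS'' does not follow when $r<1$; the failure in that case lies on the additive side, exactly as the paper argues: $(\rr_{\ge 0},+)$ is reduced, has non-units, and every $x>0$ splits as $x/2+x/2$, so it is not atomic and in particular not a BFM. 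Your argument is correct for $r=1$, where $1$ is the only multiplicative unit of $\{0\}\cup\rr_{\ge 1}$, matching the paper's appeal to its earlier example of $\{0\}\cup\qq_{\ge 1}$. With that single case repaired (one sentence about the additive monoid of $\rr_{\ge 0}$), the proposal is correct.
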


\begin{proof}
	Suppose the set $S^\bullet \setminus \{1\}$ does not have $1$ as a limit point. For each $r \in [0,1) \cap S$ the sequence $(1 + r^n)_{n \in \nn}$ consists of elements of $S$ and converges to $1$. This ensures that $r = 0$ because $1$ is not a limit point of $S^\bullet \setminus \{1\}$. As a consequence, $(0,1) \cap S = \emptyset$. In particular, $0$ is not a limit point of $S^\bullet$ and, therefore, \cite[Proposition~4.5]{fG19} ensures that $(S,+)$ is a BFM. To show that $(S^\bullet, \cdot)$ is also a BFM, we use the fact that $(S^\bullet, \cdot)$ is isomorphic to $(\ln S^\bullet, +)$. Because $1$ is not a limit point of $S^\bullet \setminus \{1\}$, it follows that $0$ is not a limit point of $(\ln S^\bullet) \setminus \{0\}$. Then $(\ln S^\bullet, +)$ is a BFM by \cite[Proposition~4.5]{fG19}, and so $(S^\bullet, \cdot)$ is also a BFM. Thus, $S$ is a bi-BFS.
	
	For the direct implication of the second statement, it suffices to verify that whenever $r\leq 1$, either $(S_r,+)$ or $(S_r^\bullet, \cdot)$ is not atomic. When $r = 1$, the monoid $(S_r^\bullet, \cdot)$ is antimatter, as we have already seen in Example~\ref{ex:a positive semiring with additive monoid BFM and and multiplicative monoid antimatter}. Assume now that $r < 1$, and fix $s \in \rr_{> 0}$. Now take $n \in \nn$ such that $s/r^n > 1$. This implies that $s/r^n \in S_r$, and so $s = r^n(s/r^n) \in S_r$. Thus, $S_r$ is the positive semiring $\rr_{\ge 0}$, which is not a bi-BFS because $(\rr_{\ge 0},+)$ is not atomic. The reverse implication of the second statement is an immediate consequence of the first statement.
	
	Finally, we describe $\mathcal{A}_+(S_r)$ and $\mathcal{A}_\times(S_r)$ when $r > 1$. Notice that $S_r = \nn_0 \cup \rr_{\ge r}$. Since $(S_r,+)$ is a reduced monoid and $\rr_{\ge r+1} \subseteq 1 + S_r^\bullet$, one finds that $\mathcal{A}_+(S_r) \subseteq S^\bullet_r \cap \rr_{< r+1} =  \ldb 1, \lfloor r \rfloor \rdb \cup [r, r+1)$. Because $1 \in \mathcal{A}_+(S_r)$ and $m \notin \mathcal{A}_+(S_r)$ for any $m \in \ldb 2, \lceil r \rceil \rdb$,
	\[
		\mathcal{A}_+(S_r) = \big( \{1\} \cup [r,r+1) \big) \setminus \{ \lceil r \rceil \}.
	\]
	To determine $\mathcal{A}_\times(S_r)$, note first that $(S_r^\bullet, \cdot)$ is reduced and $(S_r)_{\ge r^2} \subseteq (S_r)_{> 1} \cdot (S_r)_{> 1}$. Consequently, $\mathcal{A}_\times(S_r) \subseteq (S_r^\bullet)_{< r^2} = \ldb 1, \lfloor r \rfloor \rdb \cup [r,r^2)$. Then we see that the only elements in $\ldb 1, \lfloor r \rfloor \rdb \cup [r,r^2)$ that are not in $\mathcal{A}_\times(S_r)$ are those that are properly divisible in $(S^\bullet_r, \cdot)$ by some prime number. Hence
	\[
		\mathcal{A}_\times(S_r) = \big( \pp_{< r^2} \cup [r,r^2) \big) \setminus \pp \cdot (S_r)_{> 1}.
	\]
\end{proof}

\smallskip

As the following example illustrates, the converse of the first statement of Theorem~\ref{thm:BF sufficient condition} does not hold.

\smallskip

\begin{example} \label{ex:an FF PM having 0 as a limit point}
	Consider the positive monoid $M = \big\langle \frac{\lfloor \sqrt p \rfloor }{p}  \mid p \in \pp \big\rangle$. Because $M$ is a submonoid of $\langle 1/p \mid p \in \pp \rangle$, it can be easily deduced from \cite[Example~2.1]{AAZ90} that $M$ satisfies the ACCP, and so that it is atomic. In addition, it is not hard to verify that $\mathcal{A}(M) = \big\{ \frac{\lfloor \sqrt{p} \rfloor}{p} \mid p \in \pp \big\}$. We proceed to show that $M$ is an FFM. Fix $q \in M^\bullet$, and suppose that $z$ is a factorization of $q$. Now take $p \in \pp$ such that $\sqrt{p} > \max \{q, \sqrt{\mathsf{d}(q)}\}$. Since $p$ does not divide $\mathsf{d}(q)$, the number of copies of the atom $\lfloor \sqrt{p} \rfloor /p$ that appear in $z$ must be a multiple of $p$, namely, $np$. Therefore
	\[
		n \lfloor \sqrt{p} \rfloor = np \frac{\lfloor \sqrt{p} \rfloor}{p} \le q.
	\]
	This, along with the inequality $\sqrt{p} > q$, implies that $n = 0$. Thus, $\lfloor \sqrt{p} \rfloor /p$ does not divide $q$ in~$M$. As a result, only finitely many atoms (or elements) in $M$ divide $q$, and so $\mathsf{Z}_M(q)$ is finite. Hence it follows from \cite[Theorem~2]{fHK92} that $M$ is an FFM and, in particular, a BFM.
	
	Now consider the positive semiring $S = E(M)$. It follows from~Theorem \ref{t:L-W} that $(S,+)$ is the free (commutative) monoid on the set $\{e^q \mid q \in M \}$ and thus a BFM. We proceed to show that $(S^\bullet, \cdot)$ is also a BFM. We can also deduce from Theorem~\ref{t:L-W} that $\nn$ is a divisor-closed submonoid of $(S^\bullet, \cdot)$. Therefore $\mathsf{L}_{S^\bullet}(m)$, which equals $\mathsf{L}_\nn(m)$, is a finite set for every $m \in \nn$. So take $x = \sum_{i=1}^n c_i e^{q_i} \in S^\bullet \setminus \nn$, where $c_1, \dots, c_n \in \nn$ and $q_1, \dots q_n \in M$ satisfy $q_1 > \cdots > q_n$. As $x \notin \nn$, we see that $q_1 > 0$. Write $x = m_1 \cdots m_k f_1 \cdots f_\ell$ for some $m_1, \dots, m_k \in \nn_{\ge 2}$ (allowing $k=0$, in which case, $m_1 \cdots m_k = 1$) and $f_1, \dots, f_\ell \in S \setminus \nn$. %We claim that $k + \ell \le \max \mathsf{L}_M(q_1) + \log_2 x$. 
	Because $x \ge m_1 \cdots m_k$ and $m_i \ge 2$ for every $i \in \ldb 1,k \rdb$, it follows that $k \le \log_2 x$. For each $i \in \ldb 1, \ell \rdb$, write $f_i = \sum_{j=1}^{n_i} c_{j,i}e^{q_{j,i}}$ for some $c_{1,i}, \dots, c_{n_i, i} \in \nn$ and $q_{1,i}, \dots,  q_{n_i,i} \in M$ with $q_{1,i} > \dots > q_{n_i,i}$. Observe that, for each $i \in \ldb 1, \ell \rdb$, the fact that $f_i \in S \setminus \nn$ guarantees that $q_{1,i} \in M^\bullet$. As $q_1 = \sum_{i=1}^\ell q_{1,i}$, we obtain that $\ell \le \max \mathsf{L}_M(q_1)$, and so $k+ \ell \le \log_2 x + \mathsf{L}_{M}(q_1)$. Since $M$ is a BFM and $q_1$ is uniquely determined by $x$, the set $\mathsf{L}_{S^\bullet}(x)$ is bounded. Thus, $(S^\bullet, \cdot)$ is also a BFM and, therefore, the positive semiring~$S$ is a bi-BFS. However, it is clear that $1$ is a limit point of $S^\bullet \setminus \{1\}$.
\end{example}

\smallskip

Now we construct a positive semiring that satisfies the bi-ACCP but is not a bi-BFS.

\smallskip

\begin{example}\label{ex:bi-ACCP positive semiring that is not bi-BFM}
	Let $M = \langle 1/p \mid p \in \pp \rangle$ and consider the positive semiring $S = E(M)$. By Proposition~\ref{prop:class of positive semirings with thebi-ACCP}, $S$ satisfies the bi-ACCP. Let us argue that $(S^\bullet, \cdot)$ is not a BFM. It is clear that $M$ is not a BFM (for instance, $p (1/p)$ is a length-$p$ factorization in $\mathsf{Z}_M(1)$ for every $p \in \pp$). On the other hand,~$M$ is isomorphic to the multiplicative monoid $e(M)$ and, therefore, $e(M)$ is not a BFM. By Lemma~\ref{l:e(M)}, the monoid $e(M)$ is a divisor-closed submonoid of $(S^\bullet, \cdot)$. Thus, the monoid $(S^\bullet, \cdot)$ is not a BFM. Hence $S$ is not a bi-BFS even though it satisfies the bi-ACCP.
\end{example}

\smallskip

As every bi-BFS satisfies the bi-ACCP, Example~\ref{ex:bi-ACCP positive semiring that is not bi-BFM} allows us to extend Diagram~\eqref{diag:atomic chain unti bi-ACCP} as follows.

\smallskip

\begin{equation} \label{diag:AAZ's atomic chain until bi-BFM}
	\begin{tikzcd}%[cramped]
		 \ \textbf{ bi-BFS } \arrow[r, Rightarrow]  \arrow[red, r, Leftarrow, "/"{anchor=center,sloped}, shift left=1.7ex] & \textbf{ bi-ACCP}  \arrow[r, Rightarrow] \arrow[red, r, Leftarrow, "/"{anchor=center,sloped}, shift left=1.7ex]  & \textbf{ bi-atomic}
	\end{tikzcd}
\end{equation}

\bigskip
%%%%%%%%%%%%%%%%%%%%%%
%%%%%%%%%%%%%%%%%%%%%%
\section{The Finite Factorization Property}
\label{sec:FFP}

\smallskip

Our next task is to introduce a class of positive semirings that are bi-FFSs, the class consisting of increasing positive semirings. Following \cite{fG19}, we say that a positive monoid $M$ is \emph{strongly increasing} if $\mathcal{A}(M)$ is the underlying set of a sequence that increases to infinity. Strongly increasing positive monoids have been considered in \cite{mBA20,BG20a,fG19}.

\smallskip

\begin{example}
	(1) One can easily see that the positive monoid $M_1 = \big\langle p + \frac{1}{p} \mid p \in \pp \big\rangle$ is atomic with $\mathcal{A}(M_1) = \{ p + \frac{1}{p} \mid p \in \pp \}$, which is clearly the underlying set of a divergent increasing sequence. Hence~$M_1$ is a strongly increasing positive monoid.
	\smallskip
	
	(2) On the other hand, the positive monoid $M_2 = \{0\} \cup \rr_{\ge 1}$ is atomic with $\mathcal{A}(M_2) = [1,2)$. Since $\mathcal{A}(M_2)$ is uncountable, it cannot be the underlying set of any sequence. As a consequence, $M_2$ is not strongly increasing.
	\smallskip
	
	(3) Now consider the positive monoid $M_3 = \big\langle \frac{n}{n+1} \mid n \in \nn \big\rangle$. One can readily verify that $M_3$ is atomic with $\mathcal{A}(M_3) = \{\frac{n}{n+1} \mid n \in \nn \}$. Even though $\mathcal{A}(M_3)$ is the underlying set of the increasing sequence $\big(\frac{n}{n+1}\big)_{n \in \nn}$, the monoid $M_3$ is not strongly increasing because the sequence $\big(\frac{n}{n+1}\big)_{n \in \nn}$ does not increase to infinity.
\end{example}

\smallskip

We call a positive semiring $S$ \emph{strongly increasing} provided that $(S,+)$ is a strongly increasing monoid. Every strongly increasing positive semiring is a bi-FFS, as we proceed to show. 

\smallskip

\begin{theorem} \label{thm:FF sufficient condition}
	Every strongly increasing positive semiring is a bi-FFS. In addition, if $M$ is a strongly increasing positive monoid consisting of algebraic numbers, then
	\[
		\mathcal{A}_+(E(M)) = \{e^r \mid r \in M\} \quad \text{ and } \quad \mathcal{A}_\times(E(M)) \supseteq \{e^a \mid a \in \mathcal{A}(M) \}.
	\]
\end{theorem}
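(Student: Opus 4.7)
The plan is to prove the bi-FFS claim first, then derive the atomic descriptions of $E(M)$ from Theorem~\ref{t:L-W} and Lemma~\ref{l:e(M)}. The key intermediate step is to show that if $S$ is a strongly increasing positive semiring, then $S \cap [0,N]$ is finite for every $N > 0$. Enumerating $\mathcal{A}_+(S) = \{a_n\}_{n \ge 1}$ as a sequence strictly increasing to $\infty$, any $s \in S$ with $s \le N$ is a sum of atoms all lying in the finite set $\{a_n : a_n \le N\}$, each appearing with multiplicity at most $N/a_1$. A consequence is that $S \cap (0,1) = \emptyset$: otherwise, for some $s \in (0,1) \cap S$, the sequence $\{s^n\}_{n \ge 1}$ would produce infinitely many distinct elements of $S \cap [0,1]$.

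From this I would deduce the bi-FFS property separately on each side. For $(S,+)$, every factorization of an element $s$ uses only atoms from the finite set $\mathcal{A}_+(S) \cap [0,s]$, each with bounded multiplicity; hence the set of factorizations is finite. For $(S^\bullet,\cdot)$, the gap $S \cap (0,1) = \emptyset$ forces $(S^\bullet,\cdot)$ to be reduced (an invertible pair $u \neq 1$ would satisfy $\min(u, u^{-1}) < 1$), and it guarantees that $b := \min \, S^\bullet_{>1}$ exists with $b > 1$. Every non-unit of $(S^\bullet,\cdot)$ is then at least $b$, so any multiplicative factorization of $s$ has length at most $\log_b s$ and uses factors from the finite set $S \cap [1,s]$; therefore $s$ admits only finitely many multiplicative factorizations.

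For the second part, assume $M$ is strongly increasing and consists of algebraic numbers. Theorem~\ref{t:L-W} gives that $(E(M),+)$ is the free commutative monoid on $\{e^m : m \in M\}$, and since the atoms of a free commutative monoid are exactly its free generators, the first equality follows. For the multiplicative inclusion, note that $m \mapsto e^m$ is a monoid isomorphism $(M,+) \to (e(M),\cdot)$, so $\{e^a : a \in \mathcal{A}(M)\} = \mathcal{A}(e(M))$. Given $a \in \mathcal{A}(M)$, suppose $e^a = fg$ in $E(M)^\bullet$; by Lemma~\ref{l:e(M)}, $e(M)$ is divisor-closed in $(E(M)^\bullet,\cdot)$, which forces $f, g \in e(M)$, and then atomicity of $e^a$ in $e(M)$ yields $f = 1$ or $g = 1$. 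Combined with $e^a \neq 1$ (since $a > 0$ in the reduced monoid $M$), this puts $e^a$ in $\mathcal{A}_\times(E(M))$. The main obstacle I anticipate is the multiplicative FFM step: without first establishing that being strongly increasing forces $S \cap (0,1) = \emptyset$ (and hence bi-reducedness together with a positive gap above $1$), there would be no uniform lower bound on non-units to control factorization lengths.
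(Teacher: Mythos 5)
Your proposal is correct in substance but reaches the bi-FFS claim by a more self-contained route than the paper. The paper's proof is citation-driven: it invokes \cite[Theorem~5.6]{fG19} to conclude that $(S,+)$ is an FFM, deduces $S\cap(0,1)=\emptyset$ exactly as you do, and then uses local finiteness of $S$ only to list $S$ as a strictly increasing sequence, so that $(\ln S^\bullet,+)$ is generated by an increasing sequence and \cite[Theorem~5.6]{fG19} applies a second time to give the multiplicative FF property. Your version replaces both citations by elementary counts (additively: finitely many atoms below $s$, each with multiplicity at most $s/a_1$; multiplicatively: all divisors of $s$ lie in the finite set $S\cap[1,s]$ and lengths are at most $\log_b s$ with $b=\min\big(S^\bullet\setminus\{1\}\big)>1$, which also re-proves atomicity of $(S^\bullet,\cdot)$). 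The one point to make explicit is exactly what the citation buys the paper: your step ``every $s\le N$ is a sum of atoms'' presupposes that $(S,+)$ is atomic, and the definition of strongly increasing as stated (only that $\mathcal{A}_+(S)$ is the underlying set of a sequence increasing to infinity) does not literally say the atoms generate $S$. The paper obtains atomicity for free from \cite{fG19}, whose hypothesis is that the monoid is generated by a sequence increasing to infinity; under that intended reading your finiteness argument runs verbatim with generators in place of atoms. So either cite that result, as the paper does, or state the generating-sequence form of the hypothesis before your local-finiteness lemma.

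For the second statement, your treatment of $\mathcal{A}_+(E(M))$ is identical to the paper's (Theorem~\ref{t:L-W} plus the fact that the atoms of a free commutative monoid are its basis elements), while for $\mathcal{A}_\times(E(M))$ you route through Lemma~\ref{l:e(M)} and the isomorphism $M\cong e(M)$ instead of the paper's inline expansion of $e^a=fg$ in the free additive monoid; this is shorter and equally valid, provided you note that $(E(M)^\bullet,\cdot)$ is reduced (immediate from $E(M)^\bullet\subseteq\rr_{\ge 1}$), so that $e^a\neq 1$ really does make $e^a$ a non-unit and hence an atom.
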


\begin{proof}
	Let $S$ be a strongly increasing positive semiring. Then $(S,+)$ is a strongly increasing positive monoid, and it follows from \cite[Theorem~5.6]{fG19} that $(S,+)$ is an FFM. Since $S$ is a strongly increasing positive semiring, $0$ cannot be a limit point of $S^\bullet$. As a result, if $r \in S_{< 1}$, then the fact that the sequence $(r^n)_{n \in \nn}$ of~$S$ converges to $0$ enforces the equality $r = 0$. Hence $\inf S^\bullet = 1$, and so $(\ln S^\bullet, +)$ is a positive monoid. Because $S$ is strongly increasing, the set $S \cap [0,n]$ must be finite for every $n \in \nn$. Therefore there exists a strictly increasing sequence $(s_n)_{n \in \nn_0}$ with underlying set $S$. This implies that $(\ln s_n)_{n \in \nn}$ is an increasing sequence generating $(\ln S^\bullet, +)$, and so~\cite[Theorem~5.6]{fG19} ensures that $(\ln S^\bullet, +)$ is an FFM. Thus, $S$ is a bi-FFS.
	
	Now suppose that $M$ is a strongly increasing positive monoid consisting of algebraic numbers, and set $S = E(M)$. It follows from Theorem~\ref{t:L-W} that $(S,+)$ is the free (commutative) monoid with basis $\{e^r \mid r \in M\}$. As a result, $\mathcal{A}_+(S) = \{e^r \mid r \in M\}$. To argue the last inclusion, take $a \in \mathcal{A}(M)$ and write $e^a = fg$, where $f = \sum_{i=1}^m b_i e^{q_i}$ and $g = \sum_{j=1}^n c_j e^{r_j}$ for some coefficients $b_1, \dots, b_m$ and $c_1, \dots, c_n$ in $\nn$ and exponents $q_1, \dots, q_m$ and $r_1, \dots, r_n$ in $M$ with $q_1 > \dots > q_m \ge 0$ and $r_1 > \dots > r_n \ge 0$. Since $(S,+)$ is free on $\{e^r \mid r \in M\}$, after distributing the right-hand side of $e^a = fg$, one obtains that $q_1 + r_1 = q_m + r_n = a$, which implies that $m = n = 1$ and $b_1 = c_1 = 1$. Since $q_1 + r_1 = a \in \mathcal{A}(M)$, it follows that either $f = 1$ or $g = 1$, and so $e^a \in \mathcal{A}_\times(S)$.
\end{proof}

\smallskip

The converse of Theorem~\ref{thm:FF sufficient condition} does not hold, as we illustrate in the following example.

\smallskip

\begin{example} \label{ex:FF PM that is not increasing}
	Let $M$ be the positive monoid generated by the set
	\begin{equation} \label{eq:FF does not implies increasing}
		A = \bigg\{\frac{p_{2n}^2 + 1}{p_{2n}}, \frac{p_{2n+1} + 1}{p_{2n+1}} \ \bigg{|} \ n \in \nn \bigg\},
	\end{equation}
	where $(p_n)_{n \in \nn}$ is a strictly increasing sequence of primes. Now we consider the positive semiring $S = E(M)$. Because $(S,+)$ is the free (commutative) monoid on $\{e^r \mid r \in M\}$ by Theorem~\ref{t:L-W}, it follows that $\mathcal{A}_+(S) = \{e^r \mid r \in M\}$. Since $\mathcal{A}_+(S)$ is an unbounded subset of $\rr$ having $e$ as a limit point, it cannot be the underlying set of any increasing sequence. As each generating set of~$(S,+)$ must contain~$A$, the positive semiring $S$ is not strongly increasing.
	
	Since the denominators of any two different rationals in $A$ are distinct primes, it is not hard to verify that $A = \mathcal{A}(M)$, from which we can conclude that $M$ is atomic. Let us show, in fact, that $M$ is an FFM. To do this, we proceed as we did in Example~\ref{ex:an FF PM having 0 as a limit point}. Fix $q \in M^\bullet$ and then take $D_q$ to be the set of all prime numbers dividing $\mathsf{d}(q)$. Now choose $n_0 \in \nn$ such that $n_0 > \max \{q, \mathsf{d}(q)\}$. For each $a \in A$ such that $\mathsf{d}(a) > n_0$, the number $c$ of copies of the atom $a$ appearing in any factorization $z$ in $\mathsf{Z}_M(q)$ must be a multiple of $\mathsf{d}(a)$ because $\mathsf{d}(a) \notin D_q$. If $c$ were nonzero, $q \ge ca \ge \mathsf{d}(a)a > \mathsf{d}(a) > q$. Thus, $c = 0$. As a consequence, if an atom $a$ divides $q$ in $M$, then $\mathsf{d}(a) \le n_0$. Hence only finitely many elements of $\mathcal{A}(M)$ divide $q$ in $M$, and so~$M$ is an FFM by \cite[Theorem~2]{fHK92}.
	
	Finally, we show that $S$ is a bi-FFS. Since $(S,+)$ is a free (commutative) monoid by Theorem~\ref{t:L-W}, this amounts to verifying that the multiplicative monoid $(S^\bullet, \cdot)$ is an FFM. Fix $s = \sum_{i=1}^m b_i e^{q_i} \in S^\bullet \setminus \{1\}$ for some $b_1, \dots, b_m \in \nn$ and $q_1, \dots, q_m \in M$.  For every $i \in \ldb 1,m \rdb$, let $D_i$ denote the set of divisors of~$q_i$ in $M$. Because $M$ is an FFM, the set $D = D_1 \cup \dots \cup D_m$ is finite. Now set $b = \max\{b_1, \dots, b_m\}$. Let $r = \sum_{i=1}^n c_i e^{r_i} \in S^\bullet \setminus \{1\}$ be a divisor of $s$ in $(S^\bullet, \cdot)$, where $c_1, \dots, c_n \in \nn$ and $r_1, \dots, r_n \in M$. Since $(S,+)$ is free on $\{e^q \mid q \in M\}$, it is not hard to see that $r_i \in D$ and $c_i \le b$ for every $i \in \ldb 1,n \rdb$. Thus, $s$ has only finitely many divisors in $(S^\bullet, \cdot)$. As every element of $S^\bullet$ has finitely many divisors, it follows from \cite[Theorem~2]{fHK92} that $(S^\bullet, \cdot)$ is an FFM. Hence $S$ is a bi-FFS.
\end{example}

\smallskip

We conclude this section giving an example of a bi-BFS that is not a bi-FFS. %nor a bi-HFM.

\smallskip

\begin{example} \label{ex:bi-BFM not bi-FFM}
	Consider the positive semiring $S_2 = \nn_0 \cup \rr_{\ge 2}$. Note that $S_2$ is reduced because $1 = \inf S_2^\bullet$. It follows from Theorem~\ref{thm:BF sufficient condition} that $S_2$ is a bi-BFS satisfying that $\mathcal{A}_+(S_2) = \{1\} \cup (2,3)$ and $\mathcal{A}_\times(S_2) = [2,4)$. To verify that the additive monoid of $S_2$ is not an FFM, it suffices to take $r \in (4,5)$ and observe that the formal sum $(2 + 1/n) + (r - 2 - 1/n)$ is a length-$2$ factorization of $r$ in $(S_2,+)$ for every $n \in \nn$ with $n > \frac{1}{r-4}$. In a similar way, we can argue that the multiplicative monoid  $(S_2^\bullet, \cdot)$ is not an FFM. Hence $S_2$ is a bi-BFS that is not a bi-FFS. One can use a similar argument to show that, for each $r \ge 2$, the positive semiring $S_r = \nn_0 \cup \rr_{\ge r}$ is a bi-BFS that is not a bi-FFS.
\end{example}

\smallskip

It is clear that every bi-FFS is a bi-BFS, and we have just seen in Example~\ref{ex:bi-BFM not bi-FFM} that not every bi-BFS is a bi-FFS. Hence we can extend Diagram~\eqref{diag:AAZ's atomic chain until bi-BFM} as follows.

\smallskip

\begin{equation} \label{diag:AAZ's atomic chain until bi-FFM}
	\begin{tikzcd}%[cramped]
		\textbf{ bi-FFS } \arrow[r, Rightarrow]  \arrow[red, r, Leftarrow, "/"{anchor=center,sloped}, shift left=1.7ex] & \textbf{ bi-BFS } \arrow[r, Rightarrow]  \arrow[red, r, Leftarrow, "/"{anchor=center,sloped}, shift left=1.7ex] & \textbf{ bi-ACCP}  \arrow[r, Rightarrow] \arrow[red, r, Leftarrow, "/"{anchor=center,sloped}, shift left=1.7ex]  & \textbf{ bi-atomic}
	\end{tikzcd}
\end{equation}

\bigskip
%%%%%%%%%%%%%%%%%%%%%%%%%%%%%%%%
%%%%%%%%%%%%%%%%%%%%%%%%%%%%%%%%
\section{The Half-Factorial and the Length-Factorial Properties}
\label{sec:O/HFP}

\smallskip

In this final section, we consider the half-factorial and the length-factorial properties. We give two simple necessary conditions for a positive semiring to be a bi-HFS or a bi-LFS. These necessary conditions will allow us to provide several examples of positive semirings that are bi-BFSs but not bi-HFSs or bi-LFSs. In particular, they can be applied to some of the examples of positive semirings of the form $E(M)$ we have seen in previous sections. A rather striking example of a positive semiring whose multiplicative monoid is neither an HFM nor an LFM is $E(\nn_0)$; we will discuss this in detail in Example~\ref{ex:bi-FFS neither bi-HFS nor bi-LFS}. The section concludes with a final extended version of Diagram~\eqref{diag:AAZ's atomic chain until bi-FFM}.

\smallskip

\begin{prop}\label{prop:bi-HFS necessary conditions}
	If a positive semiring $S$ is a bi-HFS, then the following statements hold.
	\begin{enumerate}
		\item $S \cap \mathbb Q = \nn_0$.
		\smallskip
		
		\item If $S \cap (0,1) = \emptyset$ and $a \in \mathcal A_\times(S)$, then $S \cap \{a^q \mid q \in \qq\} = \{a^n \mid n \in \nn_0\}$.
	\end{enumerate}
\end{prop}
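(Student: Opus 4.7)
The approach for both parts is to exploit the length homomorphism that every half-factorial monoid admits: if $(M, *)$ is an HFM with identity $\iota$, then $\mathsf{L}_M(b)$ is a singleton for every $b \in M$, and sending $b$ to that unique length defines a monoid homomorphism $\ell \colon M \to (\nn_0, +)$ satisfying $\ell(\iota) = 0$ and $\ell(a) = 1$ for every $a \in \mathcal{A}(M)$. I will apply this separately to $(S, +)$ and to $(S^\bullet, \cdot)$. The only nontrivial ingredient beyond arithmetic is Proposition~\ref{prop:atoms of general positive semirings}, which will be invoked in part~(1) to identify $1$ as an additive atom.

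For part~(1), the additive monoid $(S, +)$ is atomic (being an HFM) and nontrivial (since $1 \in S$), so $\mathcal{A}_+(S) \neq \emptyset$ and hence $1 \in \mathcal{A}_+(S)$ by Proposition~\ref{prop:atoms of general positive semirings}. Consequently, the additive length homomorphism $\ell_+$ satisfies $\ell_+(n) = n$ for every $n \in \nn_0$. Given $q \in S \cap \qq_{> 0}$, write $q = n/d$ in lowest terms; then in $(S, +)$ the relation $n = d\, q$ holds, so applying $\ell_+$ yields $n = d \, \ell_+(q)$, forcing $d \mid n$, and the coprimality $\gcd(n, d) = 1$ collapses this to $d = 1$. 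Thus $q \in \nn$, which gives $S \cap \qq \subseteq \nn_0$; the reverse inclusion is immediate.

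For part~(2), the hypothesis $S \cap (0,1) = \emptyset$ combined with $a \in \mathcal{A}_\times(S)$ (so that $a \neq 1$) forces $a > 1$. Suppose $a^q \in S$ for some $q \in \qq$; since $a > 1$, any $q < 0$ would place $a^q$ in $(0,1)$ and contradict the hypothesis, so $q \geq 0$. The case $q = 0$ is trivial, so assume $q > 0$ and write $q = n/d$ in lowest terms with $n, d \in \nn$. Applying the multiplicative length homomorphism $\ell_\times$ of $(S^\bullet, \cdot)$ to the identity $(a^q)^d = a^n$, and using $\ell_\times(a) = 1$, yields $d \, \ell_\times(a^q) = n$, whence $d \mid n$ and again $d = 1$, so $q \in \nn_0$. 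The main obstacle in both parts is conceptually the same small point: one must exhibit an element whose factorization length can be computed in two genuinely different ways, one through the atom ($1$ or $a$) and one through the candidate $q$, so that the coprimality of $n$ and $d$ kills $d$; aside from this, verifying that $a > 1$ and that $q \geq 0$ under the hypotheses of part~(2) is the only other thing to check.
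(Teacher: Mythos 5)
Your proof is correct and follows essentially the same route as the paper: the length homomorphism you invoke is just a repackaging of the paper's comparison of two factorization lengths (through the atom $1$, respectively $a$, and through $d$ copies of $q$, respectively of $a^q$), with Proposition~\ref{prop:atoms of general positive semirings} supplying $1 \in \mathcal{A}_+(S)$ exactly as in the paper. The only cosmetic difference is that in part~(2) you compute directly in $(S^\bullet,\cdot)$, whereas the paper passes to the isomorphic positive monoid $(\log_a S^\bullet,+)$ and repeats the part~(1) argument there.
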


\begin{proof}
	(1) It is clear that $\nn_0 \subseteq S \cap \qq$. To argue the reverse inclusion, take $q \in S^\bullet \cap \mathbb Q$. Because $(S,+)$ is atomic, $q = \sum_{i=1}^k a_i$ for some $a_1, \dots, a_k \in \mathcal A_+(S)$. Then $\sum_{i=1}^k \mathsf d(q) a_i$ is a factorization of $\mathsf n(q)$  in $(S,+)$ of length $k \mathsf{d}(q)$. On the other hand, it follows from Proposition~\ref{prop:atoms of general positive semirings} that $1 \in \mathcal A_+(S)$, and so $\mathsf n(q) \cdot 1$ is a factorization of $\mathsf n(q)$ in $(S,+)$ of length $\mathsf{n}(q)$. Since $(S, +)$ is an HFM, the equality $\mathsf n(q) = k \mathsf d(q)$ holds, whence $q = k \in \nn$. As a result, $S \cap \qq \subseteq \nn_0$.
	\smallskip
	
	(2) Assume that $S \cap (0,1) = \emptyset$, and take $a \in \mathcal{A}_\times(S)$. Clearly, the monoids $(S^\bullet, \cdot)$ and $(\log_a S^\bullet, +)$ are isomorphic. As the set $S \cap (0,1)$ is empty, $(\log_a S^\bullet, +)$ is a positive monoid. In addition, we see that $1 \in \mathcal A (\log_a S^\bullet)$ because $a \in \mathcal A_\times(S)$. Mimicking the argument in the previous paragraph, one can verify that $(\log_a S^\bullet)\cap \mathbb Q = \nn_0$. As a result, $S \cap \{a^q \mid q \in \qq\} = \{a^n \mid n \in \nn_0\}$.
\end{proof}

\smallskip

We need $S$ to be a semiring in order to guarantee part~(1) of Proposition~\ref{prop:bi-HFS necessary conditions}, as the following example illustrates.

\smallskip

\begin{example}
	For $n \in \nn$, consider the positive monoid $M_n = \langle \pi, n, \frac{1}{2}(\pi + n) \rangle$. It is not hard to check that $\mathcal{A}(M_n) = \{ \pi, n, \frac{1}{2}(\pi + n) \}$. Since $\pi + n$ and $2 \frac{\pi + n}{2}$ are two distinct factorizations in $\mathsf{Z}(\pi + n)$, the monoid $M_n$ is not a UFM. In addition, suppose that
	\[
		c_1 \pi + c_2 n +c_3 \frac{\pi + n}2 = 0
	\]
	for some $c_1, c_2, c_3 \in \zz$. Then $c_1 + c_3/2 = 0$ and $c_2 + c_3/2 = 0$, from which the equality $c_1 + c_2 + c_3 = 0$ follows. Hence $M_n$ is an HFM. However, $M_n  \cap \qq = n \nn_0$.
\end{example}

\smallskip

On the other hand, there are positive monoids that are not semirings and still satisfy the condition in part~(1) of Proposition~\ref{prop:bi-HFS necessary conditions}.

\smallskip

\begin{example}
	Let $\omega$ be an irrational number with $1 < \omega < 2$ such that $\omega$ is not a quadratic integer. Consider the positive monoid $M = \langle q + (1-q)\omega \mid q \in \qq \cap [0,1] \rangle$. Since $q + (1-q)\omega \in [1,2)$ for each $q \in \qq \cap [0,1]$, it follows that $\mathcal{A}(M) = \{ q + (1-q)\omega \mid q \in \qq \cap [0,1] \}$ and, therefore, $M$ is atomic. It is clear that $\nn_0 \subseteq M \cap \qq$. To check the reverse inclusion, take $q \in M^\bullet \cap \qq$, and write $q = \sum_{i=1}^n q_i + (1-q_i)\omega$ for some $n \in \nn$ and $q_1, \dots, q_n \in \qq \cap [0,1]$. Since $1$ and $\omega$ are linearly independent over $\qq$, it follows that $q = \sum_{i=1}^n q_i$ and $0 = \sum_{i=1}^n (1-q_i)$. As a result, $q = \sum_{i=1}^n q_i = n \in \nn$. Hence $\nn_0 = M \cap \qq$.
	
	We proceed to argue that $M$ is an HFM that is not closed under multiplication. Fix $b \in M$, and consider two factorizations of lengths $k$ and $\ell$ in $\mathsf{Z}(b)$, that is,
	\[
		\sum_{i=1}^k q_i + (1-q_i)\omega = b = \sum_{j=1}^\ell q'_j + (1 - q'_j)\omega
	\]
	for some rational numbers $q_1, \dots, q_k$ and $q_1', \dots, q'_\ell$ in the interval $[0,1]$. The fact that $1$ and $\omega$ are linearly independent over $\qq$ immediately implies that both equalities $\sum_{i=1}^k q_i = \sum_{j=1}^\ell q'_j$ and $\sum_{i=1}^k (1 - q_i) = \sum_{j=1}^\ell (1 - q'_j)$ hold. After adding both equalities, one finds that $k = \ell$. Hence $M$ is an HFM. If $M$ were closed under multiplication, then $\omega^2 = a + b\omega$ for some $a,b \in \qq_{\ge 0}$, which is not possible because $\omega$ is an irrational number that is not a quadratic integer. As a final remark, observe that for all $q_1, q_2 \in \qq \cap [0,1]$ with $q_1 + q_2 = 1$, the equality $1 + \omega = \big( q_1 + (1- q_1)\omega \big) + \big( q_2 + (1 - q_2)\omega \big)$ holds, whence $M$ is not even an FFM.
\end{example}

\smallskip
\vspace{5pt}

The following proposition extends \cite[Proposition 4.25]{CGG21} to give an analog of Proposition~\ref{prop:bi-HFS necessary conditions} for positive semirings that are bi-LFS.

\smallskip

\vspace{5pt}

\begin{prop}\label{prop:bi-LFS necessary conditions}
	If a positive semiring $S$ is a bi-LFS, then the following statements hold.
	\begin{enumerate}
		\item $|\mathcal A_+(S) \cap \mathbb Q| \le 2$.
		\smallskip
		
		\item If $S\cap (0,1)=\emptyset$ and $a\in\mathcal A_\times(S)$, then $|\mathcal A_\times(S) \cap \{a^q \mid q \in \qq\}| \leq 2$.
	\end{enumerate}
\end{prop}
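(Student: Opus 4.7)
The plan is to adapt the linear-algebra argument for length-factorial positive monoids (in the spirit of \cite[Proposition~4.25]{CGG21}) to the current semiring setting. In any LFM, a nontrivial integer linear dependence among atoms whose coefficients sum to zero yields two distinct factorizations of the same element with equal length, which is forbidden; both parts will follow by producing such a dependence whenever three rational atoms are available.

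For part~(1), I would suppose toward a contradiction that three pairwise distinct atoms $a_1, a_2, a_3$ lie in $\mathcal{A}_+(S) \cap \qq$ and use the explicit rational solution
\[
	(e_1, e_2, e_3) := (a_3 - a_2,\ a_1 - a_3,\ a_2 - a_1)
\]
to the system $e_1 a_1 + e_2 a_2 + e_3 a_3 = 0$ and $e_1 + e_2 + e_3 = 0$. Since the $a_i$ are distinct, each $e_i$ is nonzero; scaling by a common denominator makes the $e_i$ integers. Setting $c_i := \max(e_i, 0)$ and $d_i := \max(-e_i, 0)$, the formal sums $z := c_1 a_1 + c_2 a_2 + c_3 a_3$ and $z' := d_1 a_1 + d_2 a_2 + d_3 a_3$ both factor the same element $b \in S^\bullet$ in $(S,+)$, they have equal length (from $\sum_i e_i = 0$), and they are distinct (from the nonvanishing of each $e_i$, which forces $c_i \neq d_i$ coordinatewise). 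This contradicts that $(S,+)$ is an LFM.

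For part~(2), I would transfer the argument to the additive positive monoid $M := (\log_a S^\bullet, +)$. Since $S \cap (0,1) = \emptyset$ and $a > 1$ (an atom of $(S^\bullet, \cdot)$ cannot equal the multiplicative identity), $M$ is contained in $\rr_{\ge 0}$, and the isomorphism $s \mapsto \log_a s$ makes $M$ a length-factorial positive monoid. The map $a^q \mapsto q$ is then a bijection from $\mathcal{A}_\times(S) \cap \{a^q \mid q \in \qq\}$ onto $\mathcal{A}(M) \cap \qq$, so it suffices to show that any length-factorial positive monoid has at most two rational atoms. This is precisely what part~(1) establishes, since that argument invoked only the LFM property of the additive monoid and not the semiring structure of $S$.

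The central step is producing the explicit dependence $(e_1, e_2, e_3)$ and the bookkeeping needed to confirm equal length and distinctness of the resulting factorizations. The only minor pitfall is the degenerate cases $b = 0$ or $z = z'$, which are ruled out respectively by the positivity of the $a_i$ and by each $e_i$ being nonzero; I do not anticipate any substantive further obstacle.
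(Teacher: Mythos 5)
Your argument is correct, but for part~(1) it takes a different route from the paper. The paper deduces the bound by citing \cite[Theorem~3.1]{CCGS21}, which says that in an LFM there is an atom $a$ such that $\mathcal{A}_+(S)\setminus\{a\}$ is integrally independent in the Grothendieck group; since $\qq$ has rank $1$, an integrally independent set of nonzero rationals has at most one element, giving $|\mathcal{A}_+(S)\cap\qq|\le 2$. You instead argue directly: from three distinct rational atoms $a_1,a_2,a_3$ you build the integer relation obtained by clearing denominators in $(e_1,e_2,e_3)=(a_3-a_2,\,a_1-a_3,\,a_2-a_1)$, which satisfies $\sum_i e_i a_i=0$ and $\sum_i e_i=0$ with every $e_i\neq 0$, and splitting into positive and negative parts yields two distinct equal-length factorizations of a nonzero element of $(S,+)$, contradicting length-factoriality. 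This is exactly the elementary alternative the paper alludes to when it remarks that part~(1) ``could have also been proved by mimicking the proof of \cite[Proposition~2.2]{fG20b}''; it is self-contained and avoids the structural theorem, at the cost of not exposing the stronger integral-independence property that the citation provides. For part~(2) your reduction is the same as the paper's: note $a>1$ because $a$ is a non-unit and $S\cap(0,1)=\emptyset$, pass to the length-factorial positive monoid $(\log_a S^\bullet,+)$, observe that $a^q\mapsto q$ carries $\mathcal{A}_\times(S)\cap\{a^q\mid q\in\qq\}$ into the rational atoms of that monoid, and apply the monoid-only version of part~(1), which (as you correctly point out, and as the paper also notes) never used the multiplicative structure of $S$.
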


\begin{proof}
	(1) Since $(S,+)$ is an LFM, \cite[Theorem~3.1]{CCGS21} guarantees the existence of $a \in \mathcal{A}_+(S)$ such that the set $\mathcal{A}_+(S) \setminus \{a\}$ is integrally independent in the Grothendieck group of $(S,+)$. This, along with the fact that the group $\qq$ has rank $1$, immediately implies that at most two elements of $\mathcal A_+(S)$ can be rational numbers, that is, $|\mathcal A_+(S) \cap \mathbb Q| \le 2$. This part could have also been proved by mimicking the proof of \cite[Proposition~2.2]{fG20b}.
	\smallskip
	
	(2) Suppose that $S \cap (0,1) = \emptyset$, and take $a \in \mathcal{A}_\times(S)$. Take $a^{q_1}, a^{q_2}, a^{q_3} \in \mathcal{A}_\times(S)$ for some $q_1, q_2, q_3 \in \qq$. Since $(S^\bullet, \cdot) \cong (\log_a S^\bullet, +)$, it follows that $q_1, q_2, q_3 \in \mathcal{A}(\log_a S^\bullet)$. As $(\log_a S^\bullet, +)$ is an LFM, $|\{q_1, q_2, q_3\}| \le 2$ by the previous part (observe that we argued the previous part without appealing to the multiplicative structure of~$S$). Then we conclude that $|\mathcal A_\times(S) \cap \{a^q \mid q \in \qq\}| \leq 2$.
\end{proof}

\smallskip

Since we did not use the multiplicative structure of~$S$ to establish part~(1) of Proposition~\ref{prop:bi-LFS necessary conditions}, the following statement holds: $|\mathcal{A}(M) \cap \qq| \le 2$ for every length-factorial positive monoid~$M$. The same condition in part~(1) of Proposition~\ref{prop:bi-LFS necessary conditions} does not guarantee that a positive semiring is a bi-LFS. The following example sheds some light upon this observation.

\smallskip

\begin{example}
	The set $S := \nn_0 \cup \big((2,4) \setminus \qq \big) \cup \rr_{> 4}$ is closed under both addition and multiplication, and so $S$ is a positive semiring. In addition, Theorem~\ref{thm:BF sufficient condition} guarantees that $S$ is a bi-BFS and, in particular, a bi-atomic positive semiring. Now fix $r \in \qq_{> 4}$. Taking $\epsilon$ to be an irrational number so that $0 < \epsilon < r-4$, we can write $r = (2 + \epsilon) + (r-(2 + \epsilon) )$. Since both $2+ \epsilon$ and $r - (2 + \epsilon)$ are irrational numbers greater than $2$, it follows that $r \notin \mathcal{A}_+(S)$. As a result, the only rational additive atom of $S$ is~$1$, which implies that $|\mathcal{A}_+(S) \cap \qq| \le 2$. However, $(S,+)$ is not an LFM, as we proceed to verify. If $\alpha$ is an irrational number with $0 < \alpha < 1/2$, then $5/2 \pm \alpha \in \mathcal{A}_+(S)$ and so the equality $5 = (5/2 - \alpha) + (5/2 + \alpha)$ yields a length-$2$ factorization of $5$ in $(S,+)$. As distinct choices of $\alpha$ yield distinct length-$2$ factorizations of $5$, the monoid $(S,+)$ is not an LFM. Thus, $S$ is not a bi-LFS. As a side note, observe that $5$ also has a length-$5$ factorization in $(S,+)$, and so $S$ is not a bi-HFS. 
\end{example}

\smallskip

Propositions~\ref{prop:bi-HFS necessary conditions} and~\ref{prop:bi-LFS necessary conditions} immediately give rise to a wealth of examples of positive semirings that are neither bi-HFSs nor bi-LFSs. Consider, as evidence, the positive semirings in Examples~\ref{ex:an FF PM having 0 as a limit point}, \ref{ex:FF PM that is not increasing}, and~\ref{ex:bi-BFM not bi-FFM}, and the positive semirings $S_r$ in Theorem~\ref{thm:BF sufficient condition}. We note that the exponent monoids used in all such examples are not finitely generated. We proceed to provide an example of a positive semiring of the form $E(M)$ for a finitely generated monoid $M$ such that $E(M)$ is neither a bi-HFS nor a bi-LFS.

\smallskip

\begin{example}
	Let $M$ be the numerical monoid $M = \langle 2, 3 \rangle = \nn_0 \setminus \{1\}$, and consider the positive semiring $S = E(M)$. Theorem~\ref{t:L-W} ensures that $(S,+)$ is a UFM. The elements $e^2$ and $e^3$ belong to $\mathcal{A}_\times(S)$ because they are atoms of the divisor-closed submonoid $e(M)$ of $(S^\bullet, \cdot)$. Therefore the equality $(e^2)^3 = (e^3)^2$ reveals that $(S^\bullet, \cdot)$ is not an HFM, and so $S$ is not a bi-HFS. Since the monoids $e(M)$ and~$M$ are isomorphic, it follows from \cite[Example~2.13]{CS11} that $e(M)$ is an LFM (see also \cite[Corollary~3.3]{CCGS21}). However, we will verify that $(S^\bullet, \cdot)$ is not an LFM. Consider the element $e^2 + e^3$ of~$S$ and write $e^2 + e^3 = \big( \sum_{i=0}^m a_i e^i \big) \big( \sum_{j=0}^n b_j e^j \big)$ taking $a_0, \dots, a_m$ and $b_0, \dots, b_n$ in~$\nn_0$ with $a_m b_n \neq 0$ and $a_1 = b_1 = 0$. As $(S,+)$ is free on $\{e^j \mid j \in M\}$ by Theorem~\ref{t:L-W}, it follows that $a_m b_n e^{m+n} = e^3$, whence $a_m = b_n = 1$ and $m+n = 3$. Assuming that $m < n$, we obtain that $m=0$ and $n=3$, from which $\sum_{i=0}^m a_i e^i = e^0 = 1$. Thus, $e^2 + e^3 \in \mathcal{A}_\times(S)$. In a similar way, we can verify that $e^2+ 2e^3 + e^4 \in \mathcal{A}_\times(S)$. Now the equality $(e^2+e^3)(e^2+e^3) = e^2(e^2 + 2e^3 + e^4)$ allows us to conclude that $(S^\bullet, \cdot)$ is not an LFM. Hence $S$ is not a bi-LFS.
\end{example}

\smallskip

It is clear that $\nn_0$ is a positive semiring that is a bi-UFS. As the reader may have already noticed, this is the only example of a bi-UFS that we have exhibited so far. Indeed, the aforementioned results lead us to make the following conjecture.

\smallskip

\begin{conj}\label{conj:bi-UFS}
	A positive semiring $S$ is a bi-UFS if and only if $S = \nn_0$. 
\end{conj}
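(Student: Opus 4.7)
The ``if'' direction is immediate: $(\nn_0,+)$ is free on $\{1\}$ and $(\nn_0^\bullet,\cdot) = (\nn,\cdot)$ is free on $\pp$. For the converse, suppose that $S$ is a bi-UFS. Since every bi-UFS is a bi-HFS, Proposition~\ref{prop:bi-HFS necessary conditions}(1) yields $S \cap \qq = \nn_0$; in particular $S \cap (0,1) = \emptyset$. A unit $u \in \uu(S)$ with $u \ne 1$ would force one of $u, u^{-1}$ to lie in $S \cap (0,1)$, which is empty. Hence $\uu(S) = \{1\}$, the semiring $S$ is bi-reduced, and both $(S,+)$ and $(S^\bullet,\cdot)$ are free commutative monoids on their respective sets of atoms.

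Aiming for a contradiction, fix $\alpha \in S \setminus \nn_0$, necessarily irrational with $\alpha > 1$, and split based on the algebraicity of $\alpha$. In the transcendental case, evaluation at $\alpha$ gives a semiring embedding $\nn_0[x] \hookrightarrow S$ sending $x$ to $\alpha$, and a direct coefficient comparison verifies that each of $1+\alpha$, $1+\alpha+\alpha^2$, $1+\alpha^3$, and $1+\alpha^2+\alpha^4$ is multiplicatively irreducible in $\nn_0[\alpha]$. The identity
\[
(1+\alpha+\alpha^2)(1+\alpha^3) \;=\; (1+\alpha)(1+\alpha^2+\alpha^4)
\]
then furnishes two genuinely distinct factorizations in $\nn_0[\alpha]$ of a single element of $S$. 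In the algebraic case, let $m(x)$ be the minimal polynomial of $\alpha$ of degree $d \ge 2$, with minimal pair $(m^+,m^-)$; Proposition~\ref{prop:additive atoms of CPS}(2) guarantees that $1, \alpha, \dots, \alpha^{d-1}$ all belong to $\mathcal A_+(\nn_0[\alpha])$, and the relation $m^+(\alpha) = m^-(\alpha)$ yields two distinct additive atomic expressions of a single element of $\nn_0[\alpha]$. Proposition~\ref{prop:bi-HFS necessary conditions}(2), applied to the multiplicative atoms arising here, can be used to further constrain which rational powers of $\alpha$ may appear in $S$.

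The principal obstacle in both cases is that irreducibility within the subsemiring $\nn_0[\alpha]$ does not automatically persist to the ambient $S$, since $\nn_0[\alpha]$ is generally \emph{not} divisor-closed in $(S,+)$ or in $(S^\bullet,\cdot)$. Promoting the above identities to genuine contradictions requires ruling out auxiliary divisors in $S \setminus \nn_0[\alpha]$. The natural levers are the constraint $S \cap (0,1) = \emptyset$, which eliminates many candidates for small divisors, together with a rank or linear-independence argument on $\gp(S,+)$; in the transcendental case the Lindemann--Weierstrass Theorem (Theorem~\ref{t:L-W}) is a plausible source of such independence. Executing this step cleanly, particularly in the algebraic case of degree $d \ge 2$ where no transcendence tool is available, is what I expect to be the decisive and hardest part of the argument.
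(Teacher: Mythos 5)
The statement you are proving is posed in the paper as Conjecture~\ref{conj:bi-UFS}; the authors explicitly leave it open (it is the reason the topmost implication in Diagram~\eqref{diag:update AAZ's chain for semirings} carries no slash), so there is no proof in the paper to compare against, and your proposal would need to be a complete argument on its own to settle it. The preliminary reductions you make are sound: the ``if'' direction is trivial, Proposition~\ref{prop:bi-HFS necessary conditions}(1) does give $S \cap \qq = \nn_0$, hence $S \cap (0,1) = \emptyset$, $\uu(S) = \{1\}$, and both monoids, being reduced UFMs, are free on their atom sets; also, any $\alpha \in S \setminus \nn_0$ is irrational and greater than $1$, and (since $\alpha > 1$) Proposition~\ref{prop:cyclic atomicity sufficient conditions}(2) legitimizes your use of Proposition~\ref{prop:additive atoms of CPS} inside $\nn_0[\alpha]$.

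The decisive step, however, is exactly the one you flag and do not carry out, and it is a genuine gap rather than a routine verification. The identity $(1+\alpha+\alpha^2)(1+\alpha^3) = (1+\alpha)(1+\alpha^2+\alpha^4)$ exhibits two factorizations into elements that are irreducible \emph{in} $\nn_0[\alpha]$, but since $\nn_0[\alpha]$ need not be divisor-closed in $(S^\bullet,\cdot)$, these factors may split further in $S$, and distinct factorizations into non-atoms can perfectly well refine to a single atomic factorization in a UFM (compare $4 \cdot 9 = 6 \cdot 6$ in $\nn$): no contradiction with $S$ being a bi-UFS follows without ruling out such auxiliary divisors in $S \setminus \nn_0[\alpha]$. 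The same objection applies to the additive half of your algebraic case: $m^+(\alpha) = m^-(\alpha)$ is a relation among powers of $\alpha$, but those powers (in particular $\alpha^{\deg m}$, which necessarily occurs on one side) need not be atoms of $(S,+)$, and freeness of $(S,+)$ on $\mathcal{A}_+(S)$ forbids only relations among \emph{its} atoms, not among arbitrary elements of a subsemiring. The levers you name --- $S \cap (0,1) = \emptyset$, rank considerations on $\gp(S,+)$, Lindemann--Weierstrass in the transcendental case --- are reasonable things to try, but as written they are a plan, not an argument; in particular Theorem~\ref{t:L-W} gives independence of exponentials of elements of $S$, which is not obviously relevant to divisors of $1+\alpha$ in $S$ itself. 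So the proposal establishes the easy direction and some correct necessary conditions, but it does not prove the conjecture, and the missing transfer-of-irreducibility (or transfer-of-non-uniqueness) step is precisely where the open difficulty lies.
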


\smallskip

In the direction of Conjecture~\ref{conj:bi-UFS}, we pose the following questions.

\smallskip

\begin{question} \label{quest:bi-HFS/LFS}
	\hfill
	\begin{enumerate}
		\item Is $\nn_0$ the only positive semiring that is a bi-HFS?
		\smallskip
		
		\item Is $\nn_0$ the only positive semiring that is a bi-LFS?
	\end{enumerate}
\end{question}

\smallskip

We now show that the positive semiring $E(\nn_0)$ is a bi-FFS that is neither a bi-HFS nor a bi-LFS. Lindemann-Weierstrass Theorem (Theorem~\ref{t:L-W}) guarantees that the semiring of polynomials $\nn_0[x]$ is isomorphic to the positive semiring $E(\nn_0)$ via $p(x) \mapsto p(e)$. As most readers should be more familiar with polynomial notation, in the next example we think of $E(\nn_0)$ in terms of polynomials.

\smallskip

\begin{example} \label{ex:bi-FFS neither bi-HFS nor bi-LFS}
	As $(\nn_0[x], +)$ is a free (commutative) monoid with basis $\{x^n \mid n \in \nn_0\}$, it is a UFM and hence an FFM. The multiplicative monoid $(\nn_0[x]^\bullet, \cdot)$ is also an FFM, as we proceed to argue. Fix $f(x)$ in $\nn_0[x]^\bullet$, and let $d(x) \in \nn_0[x]^\bullet$ be a divisor of $f(x)$ in $(\nn_0[x]^\bullet, \cdot)$. Then $d(x)$ divides $f(x)$ in the integral domain $\zz[x]$, which is clearly an FFD (that is, $(\zz[x]^\bullet, \cdot)$ is an FFM). It follows from \cite[Theorem~5.1]{AAZ90} that $f(x)$ has only finitely many non-associate divisors in $\zz[x]$, and so $f(x)$ has only finitely many divisors in $(\nn_0[x]^\bullet, \cdot)$. Hence $(\nn_0[x]^\bullet, \cdot)$ is an FFM by \cite[Theorem~2]{fHK92} and, as a result,~$S$ is a bi-FFS.
	
	We now illustrate that $(\nn_0[x]^\bullet, \cdot)$ is neither an HFM nor an LFM. By \cite[Corollary 2.2]{CCMS09}, for each $n \in \nn$, the polynomial $(x+n)^n(x^2 - x + 1)$ is irreducible in $(\rr_{\ge 0}[x]^\bullet, \cdot)$ and, therefore, in $(\nn_0[x]^\bullet, \cdot)$. As a result, for every $k \in \nn$, the expressions $[(x+n)^n(x^2-x+1)]\cdot[x+1]^k$ and $[x+n]^n[(x^2-x+1)(x+1)][x+1]^{k-1}$ are factorizations of the same element in $(\nn_0[x]^\bullet, \cdot)$ with lengths $k+1$ and $n+k$, respectively. Hence $(\nn_0[x]^\bullet, \cdot)$ is not an HFM. 
	On the other hand, it follows from \cite[Corollary 2.2]{CCMS09} that the polynomials $(x+1)(x^2-x+3)$ and $(x+2)(x^2-x+3)$ are irreducibles in $(\rr_{\ge 0}[x]^\bullet, \cdot)$ and so in $(\nn_0[x]^\bullet, \cdot)$. Since $[(x+1)(x^2-x+3)]\cdot[x+2]$ and $[(x+2)(x^2-x+3)]\cdot[x+1]$ are distinct factorizations of length $2$ of the same element in $(\nn_0[x]^\bullet, \cdot)$, we see that $(\nn_0[x]^\bullet, \cdot)$ is not an LFM. Thus, $\nn_0[x]$ is neither a bi-HFS nor a bi-LFS.
\end{example}

\smallskip

We now provide a concrete example of a positive semiring that is a bi-BFS but neither its additive monoid nor its multiplicative monoid are HFMs/LFMs.

\smallskip

\begin{example} \label{ex:bi-BFS that is not bi-HFS/LFS}
	Consider the positive semiring $S_2 = \{0,1\} \cup \mathbb R_{\geq 2}$. It is a bi-BFS by Theorem~\ref{thm:BF sufficient condition}. Since $\mathcal A_+(S_2) = \{1\} \cup (2,3)$, the equalities $5 \cdot 1 = 2 \cdot (5/2) = 7/3 + 8/3$ ensure that $(S_2,+)$ is neither an HFM nor an LFM. On the other hand, it follows from Theorem~\ref{thm:BF sufficient condition} that $[2,3]$ is contained in $\mathcal{A}_\times(S_2)$ and, in particular, $2,3,8/3, 14/5$, and $15/7$ belong to $\mathcal{A}_\times(S_2)$. As a result, the equality $2^3 = 3 \cdot (8/3)$ implies that $(S_2^\bullet, \cdot)$ is not an HFM and the equality $2 \cdot 3 = (14/5) \cdot (15/7)$ implies that $(S_2^\bullet, \cdot)$ is not an LFM.
\end{example}

\smallskip

It is clear that every bi-UFS is a bi-FFS, a bi-HFS, and a bi-LFS. This observation, along with Example~\ref{ex:bi-FFS neither bi-HFS nor bi-LFS}, allows us to conclude with an extended version of Diagram~\eqref{diag:AAZ's chain for semirings} for positive semirings. This extended diagram illustrates that, as it is the case for monoids and integral domains, most of the implications in Diagram~\eqref{diag:AAZ's chain for semirings} are not reversible in the context of positive semirings. Whether or not the topmost horizontal implications in Diagram~\eqref{diag:update AAZ's chain for semirings} are reversible is the gist of Conjecture~\ref{conj:bi-UFS} and Question~\ref{quest:bi-HFS/LFS}.

\smallskip

\begin{equation} \label{diag:update AAZ's chain for semirings}
	\begin{tikzcd}[cramped]
		\textbf{ bi-LFS \ } \arrow[rr, Leftarrow] && \textbf{ bi-UFS } \ \arrow[rr, Rightarrow]  \arrow[d, Rightarrow] && \ 
		\textbf{ bi-HFS } \arrow[d, Rightarrow] \\
		&& \textbf{  bi-FFS } \  \arrow[rr, Rightarrow,  shift left=-1ex]  \arrow[red, ull, Rightarrow, "/"{anchor=center,sloped}, shift left=0ex]  \arrow[red, urr, Rightarrow, "/"{anchor=center,sloped}, shift left=0.4ex] \arrow[red, rr, Leftarrow, "/"{anchor=center,sloped}, shift left=0.8ex] && \ 
		\textbf{ bi-BFS } \arrow[r, Rightarrow,  shift left=-1ex] \arrow[red, r, Leftarrow, "/"{anchor=center,sloped}, shift left=0.8ex] & 
		\textbf{ bi-ACCP}  \arrow[r, Rightarrow,  shift left=-1ex]  \arrow[red, r, Leftarrow, "/"{anchor=center,sloped}, shift left=0.8ex] & \textbf{ bi-atomic} %\\ 
%		\textbf{bi-LFS}
	\end{tikzcd}
\end{equation}
\medskip

\bigskip
%%%%%%%%%%%%%%%
%%%%%%%%%%%%%%%
\section*{Acknowledgments}

During the preparation of this paper, the third author was generously supported by the NSF award DMS-1903069.

\bigskip
%%%%%%%%%%%%%%
%%%%%%%%%%%%%%

\end{document}